\documentclass{tac}



\usepackage[all]{xy}
\usepackage{amsmath}
\usepackage{amssymb}
\usepackage{mathtools}
\usepackage{latexsym}

\newcommand*{\into}{\rightarrowtail}
\newcommand*{\onto}{\twoheadrightarrow}
\newcommand{\defeq}{\mathrel{:=}} 

\newcommand{\op}{\mathrm{op}}

\def\topdf{\texorpdfstring}

\newcommand\Z{\mathbb Z}
\newcommand\N{\mathbb N}
\newcommand\C{\mathbb C}
\newcommand\R{\mathbb R}

\newcommand{\cC}{\mathcal C}
\newcommand{\cD}{\mathcal D}
\newcommand{\cE}{\mathcal E}

\theoremstyle{definition}
\newtheorem{definition}[equation]{Definition}

\theoremstyle{plain}
\newtheorem{proposition}[equation]{Proposition}

\DeclareMathOperator{\coker}{coker}

\DeclarePairedDelimiterX{\setgiven}[2]{\{}{\}}{#1\,{:}\,\mathopen{}#2}

\input diagxy


\usepackage[colorlinks=true]{hyperref}
\hypersetup{allcolors=[rgb]{0.1,0.1,0.4}}


\author{Guillermo Corti\~nas, Devarshi Mukherjee}


\thanks{The first named author was supported by CONICET and partially supported by grants PICT 2017-1395 from Agencia Nacional de Promoci\'on Cient\'\i fica y T\'ecnica, UBACyT 0256BA from Universidad de Buenos Aires, and PGC2018-096446-B-C21 from the Spanish Ministerio de Ciencia e Innovaci\'on. The second named author was funded by a Feodor-Lynen Fellowship of the Alexander von Humboldt Foundation. The authors thank the anonymous referee for helpful comments.}

\address{Dep. Matemática-IMAS\\
 FCEyN-UBA, Ciudad Universitaria Pab 1\\
 1428 Buenos Aires\\ 
Argentina}

\title{A Quillen model structure of local homotopy equivalences}




\keywords{Model categories, cyclic homology, functional analysis}
\amsclass{18N40, 18G35, 19D55}

\eaddress{gcorti@dm.uba.ar\CR dmukherjee@dm.uba.ar}



\newtheorem{theorem}{Theorem}

\newtheoremrm{rem}{Remark}


\mathrmdef{Hom}
\mathbfdef{Set}


\begin{document}

\maketitle
\begin{abstract}
In this note, we construct a closed model structure on the category of $\Z/2\Z$-graded complexes of projective systems of ind-Banach spaces. When the base field is the fraction field \(F\) of a complete discrete valuation ring \(V\), the homotopy category of this model category is the derived category of \(\Z/2\Z\)-graded complexes of the quasi-abelian category \(\overleftarrow{\mathsf{Ind}(\mathsf{Ban}_F)}\). This homotopy category is the appropriate target of the local and analytic cyclic homology theories for complete, torsionfree \(V\)-algebras and \(\mathbb{F}\)-algebras. When the base field is \(\C\), the homotopy category is the target of local and analytic cyclic homology for pro-bornological \(\C\)-algebras, which includes the subcategory of pro-\(C^*\)-algebras. 
\end{abstract}


\section{Introduction}\label{sec-Introduction}

In their fundamental work on periodic cyclic homology, leading to the celebrated excision theorem, Cuntz and Quillen (\cite{Cuntz-Quillen:Cyclic_nonsingularity}) associate to each algebra $A$, a functorial inverse system 
$X^\infty (A)=\{X^{n+1}(A)\to X^n(A): n\ge 1\}$ of $\Z/2\Z$-graded complexes. Localizing the category $\overleftarrow{\mathsf{Kom}}$ of inverse systems of \(\Z/2\Z\)-graded complexes - or briefly, \textit{pro-supercomplexes} - at a certain class of weak equivalences, called \textit{local equivalences}, one obtains a derived category $\mathsf{Der}(\overleftarrow{\mathsf{Kom}})$ which is enriched over $\Z/2\Z$-graded complexes. The \textit{bivariant periodic cyclic homology} $HP_*(A,B)$ of a pair of algebras $(A,B)$ is then defined as the homology of the hom-complex $\Hom_{\mathsf{Der}(\overleftarrow{\mathsf{Kom}})}(X^{\infty}(A),X^{\infty}(B))$. These weak equivalences are part of a Quillen model structure on the category of pro-supercomplexes described in \cite{Cortinas-Valqui:Excision}. 

In the study of variants of $HP$ in several contexts of topological and bornological algebras, one is lead to consider inverse systems of \emph{directed} systems of $\Z/2\Z$-complexes. That is notably the case of analytic cyclic homology for torsion-free complete bornological algebras over a discrete valuation ring $V$ \cite{Cortinas-Meyer-Mukherjee:NAHA}, algebras over its residue field $\mathbb{F}$ \cite{Meyer-Mukherjee:HA} and local cyclic homology for dagger algebras \cite{Meyer-Mukherjee:localhc}. In each of the latter cases, the relevant homology is represented by a functor taking values in the category \(\mathsf{Der}(\overleftarrow{\mathsf{Ind}(\mathsf{Ban}_F)})\) which results from the category  \(\overleftarrow{\mathsf{Kom}(\mathsf{Ind}(\mathsf{Ban}_F))}\) of projective systems of $\Z/2\Z$-graded complexes of inductive systems of Banach spaces over $F$, upon inverting local weak equivalences. Similarly, local cyclic homology of pro-$C^*$-algebras can be defined in terms of a functorial complex taking values in \(\mathsf{Der}(\overleftarrow{\mathsf{Ind}(\mathsf{Ban}_\C)})\). The purpose of this article is to prove the following.

\thm\label{intro:thm} Let $\mathcal{C}$ be an exact category with enough projectives. Then the category  $\overleftarrow{\mathsf{Kom}(\mathsf{Ind}((\mathcal{C})))}$ carries an injective model structure where the weak equivalences are the local weak equivalences. Thus for the associated homotopy category we have
\[
\mathsf{Ho}(\overleftarrow{\mathsf{Kom}(\mathsf{Ind}((\mathcal{C})))})\cong \mathsf{Der}(\overleftarrow{\mathsf{Ind}(\mathcal{C})}).
\]
This applies, in particular, when $\mathcal{C}$ is the category of Banach spaces over $\R$, $\C$ or any complete valuation field $F$, equipped with the split-exact structure. If $F$ is discretely valued, the latter agrees with the quasi-abelian structure.
\endthm

\bigskip

The article is organised as follows. In Section \ref{sec:proE} we consider, for an additive category $\cE$ with kernels and cokernels, an exact structure on the category \(\mathsf{Ind}(\cE)\) whose distinguished extensions are kernel-cokernel pairs that split locally. This means that \(\mathrm{Hom}(X,-)\) preserves cokernels in \(\mathsf{Ind}(\cE)\) for \(X \in \cE\). When \(\cE\) is quasi-abelian, we use this exact structure on \(\mathsf{Ind}(\cE)\) to induce an exact structure on the category \(\overleftarrow{\mathsf{Ind}(\cE)}\) of countable projective systems of inductive systems of objects in \(\cE\). We call this exact structure the \textit{locally split} exact structure. The main result of Section \ref{sec:proE} shows that \(\overleftarrow{\mathsf{Ind}(\cE)}\) has enough injectives for the locally split exact structure. Moreover, this category is countably complete since \(\cE\) is in particular additive with kernels and cokernels. In Section \ref{sec:model}, we use the work of Gillespie \cite{gillespie2011model} and Kelly \cite{kelly2016homotopy} to show in  Proposition \ref{prop:model_pro-kom-general} that if $\mathfrak{F}$ is a countably complete exact category with enough injectives, then $\mathsf{Kom}(\mathfrak{F})$ carries a model structure, where weak equivalences are quasi-isomorphisms, and where cofibrations are degreewise inflations. In particular, this applies to $\mathfrak{F}=\overleftarrow{\mathsf{Ind}(\cE)}$ with the exact structures of Section \ref{sec:proE}.

Section \ref{sec:bivariant} specializes all of the above to the category $\cC=\mathsf{Ban}_k$, where $k$ is any nontrivially valued complete field. In Subsection \ref{subsec:nonahc} we consider the case when $k$ is discretely valued. Proposition \ref{prop:explicit_local_homotopy_equivalences} explicitly describes the weak equivalences in the resulting model structure on $\overleftarrow{\mathsf{Kom}(\mathsf{Ind}(\mathsf{Ban}_k))}$, showing that when $k$ is discretely valued, they are exactly the local homotopy equivalences used in local and analytic cyclic homology for nonarchimedean algebras \cite{Cortinas-Meyer-Mukherjee:NAHA,Meyer-Mukherjee:HA,Meyer-Mukherjee:localhc}. Thus our results allow us to interpret those homologies as homomorphism spaces in the homotopy category of our model category.  In Subsection \ref{subsec:prohl} we consider the case when $k=\C$. If one disregards the projective system level, then the exact structure on \(\mathsf{Ind}(\mathsf{Ban}_\C)\) of Section \ref{sec:proE} has previously been used in \cite[Section 2.3]{Meyer:HLHA} to define the target of local cyclic homology for locally multiplicative complex Banach algebras. The availability of a model structure for complexes of pro-ind-Banach spaces over \(\C\) means that we can extend analytic and local cyclic homology to projective systems of complete bornological and $\mathsf{Ind}$-Banach algebras, respectively, having the expected homotopy invariance, stability and excision properties (Theorem \ref{thm:pro-HL-properties}), and in the case of local cyclic homology, also invariance under isoradial embeddings (Theorem \ref{thm:isoradial-pro}). Using these properties and the universal property of Bonkat's bivariant $K$-theory for pro-$C^*$-algebras \cite{MR3649664,Bonkat:Thesis}, we obtain a Chern character from the latter to our bivariant local cyclic homology (see \ref{subsub:chern}). This Chern character could be used in the future to study the topological \(K\)-theory and local cyclic homology of the recently defined pro-\(C^*\)-algebras of noncommutative classifying spaces of quasi-topological groups, appearing in \cite{chirvasitu2023non}.

\section{An exact structure on pro-objects in \topdf{\(\mathcal{E}\)}{E}}\label{sec:proE}

In this section, we recall some generalities on Quillen's exact categories. We will show that under certain assumptions, an exact category \(\mathcal{E}\) produces the so-called injective model structure on the category of (unbounded) chain complexes \(\mathsf{Ch}(\mathcal{E})\), the homotopy category of which is the derived category of \(\mathcal{E}\). 

Let \(\mathcal{E}\) be an additive category. An \textit{extension} in \(\mathcal{E}\) is a diagram of the form 
\[
K \overset{i}\into E \overset{p}\onto Q\] where \(i\) is the kernel of \(p\) and \(p\) is the cokernel of \(i\). An \textit{exact category} \footnote{This is an equivalent formulation of Quillen's original definition due to Keller (see Appendix A in \cite{Keller:Chain_stable}).} is an additive category with a distinguished class of extensions, called \textit{conflations} - wherein the maps \(i\) and \(p\) are called inflations and deflations, respectively - satisfying the following properties: 

\begin{itemize}
\item the identity map on the zero object is a deflation;
\item if \(f\) and \(g\) are composable deflations, then their composition is a deflation;
\item the pullback of a deflation along an arbitrary morphism of \(\mathcal{E}\) exists and is a deflation;
\item the pushout of an inflation along an arbitrary morphism of \(\mathcal{E}\) exists and is an inflation.
\end{itemize}

In this article, our interest is a more convenient class of exact categories, called \textit{quasi-abelian} categories in the sense of \cite{Schneiders:Quasi-Abelian}. These are additive categories with kernels and cokernels, which are stable under pushout and pullback, respectively. In other words, they are exact categories whose distinguished class of extensions is the class of all kernel-cokernel pairs. Note however that being quasi-abelian is a \emph{property} of a category rather than additional structure. 

\begin{definition}\label{def:exact_complex}
We call a \(\Z\)-graded chain complex \((C,d)\) with entries in an exact category \(\mathcal{E}\) with kernels \textit{exact} if the induced diagram \[\ker(d) \into C \onto \ker(d)\] is a conflation in \(\mathcal{E}\). Here the inflation \(\ker(d) \to C\) is the canonical inclusion and the deflation \(C \to \ker(d)\) is the canonical map induced by \(d\). A chain map \(f \colon C \to D\) is called a \textit{quasi-isomorphism} if its mapping cone \(\mathsf{cone}(f)\) is exact. 
\end{definition}

We denote by \(\mathsf{Kom}(\mathcal{E})\) the category of \(\Z/2\Z\)-graded  chain complexes (also called \textit{supercomplexes}) with entries in \(\mathcal{E}\). Its internal \(\mathrm{Hom}\) is defined as the \textit{mapping complex}  \(\mathrm{HOM}_{\mathcal{E}}(C,D) \in \mathsf{Kom}(\mathcal{E})\) for two complexes \(C\), \(D \in \mathsf{Kom}(\mathcal{E})\) is defined as
  \begin{align*}
\mathrm{HOM}_{\mathcal{E}}(C,D)_n \defeq \underset{k \in \Z/2\Z}\prod \Hom_{\mathcal{E}}(C_k, D_{k+n}), \\
\delta_n((f_k)_{k \in \Z/2\Z}) = \delta_{k+n}^D \circ f_k - (-1)^n f_{k-1} \circ \delta_k^C,  
\end{align*}
for \(C\), \(D \in \mathsf{Kom}(\mathcal{E})\), and \(n = 0,1\). This definition makes sense for chain complexes in any additive category. As we are interested in cyclic homology theories which are \(2\)-periodic, we restrict ourselves to this category rather than working in the category \(\mathsf{Ch}(\mathcal{E})\) of \(\Z\)-graded chain complexes. The homotopy category \(\mathsf{HoKom}(\mathcal{E})\) of the category \(\mathsf{Kom}(\mathcal{E})\) is a triangulated category. We define the \textit{derived category} \(\mathsf{Der}(\mathcal{E})\) of an exact category \(\mathcal{E}\) as  the localisation of the homotopy category of \(\Z/2\Z\)-graded chain complexes \(\mathsf{HoKom}(\mathcal{E})\) at the quasi-isomorphisms. 

Given a (locally small) category \(\mathcal{C}\), we denote by \(\overleftarrow{\mathcal{C}}\) the category of countable projective systems (or briefly, \textit{pro-systems})  over \(\mathcal{C}\). Given two such pro-systems \(X\) and \(Y\), we define its Hom-set as \[\Hom_{\overleftarrow{\mathcal{C}}}(X,Y) = \underset{n}\varprojlim  \underset{m}\varinjlim \Hom_{\mathcal{C}}(X_m, Y_n).\] We will proceed as in \cite{Cortinas-Valqui:Excision} to construct a model category structure on the category of pro-supercomplexes \(\overleftarrow{\mathsf{Kom}(\mathcal{E})}\).

\lem\label{lem:quasi-abelian-pro}
Let \(\mathcal{E}\) be a quasi-abelian category. Then \(\overleftarrow{\mathcal{E}}\) is a quasi-abelian category. 
\endlem

\begin{proof}
The same proof as in \cite[Proposition 7.1.5]{Prosmans:Derived_limits} works for countable projective systems.  
\end{proof}

In our applications to local cyclic homology, although the underlying category \(\mathcal{E}\) is usually quasi-abelian, we use an exact category structure to do homological algebra which we now describe. Let \(\mathcal{C}\) be an additive category with kernels and cokernels. Its category of inductive systems \(\mathsf{Ind}(\mathcal{C})\) is the category of functors \(I \to \mathcal{C}\), where \(I\) is a filtered category. For two such inductive systems \(X \colon I \to \mathcal{C}\) and \(Y \colon J \to \mathcal{C}\), the morphism set is defined as the set 
\[\mathrm{Hom}(X,Y) = \varprojlim_i \varinjlim_j \Hom_{\mathcal{C}}(X_i, Y_j).\] We may equip \(\mathcal{C}\) with the split exact structure, that is, an extension \(K \into E \onto Q\) is a conflation in \(\mathcal{C}\) if and only if for each \(X \in \mathcal{C}\), \[\mathrm{Hom}_{\mathcal{C}}(X, K) \into \mathrm{Hom}_{\mathcal{C}}(X, E) \onto \mathrm{Hom}_{\mathcal{C}}(X, Q)\] is an exact sequence of abelian groups. This induces the following exact category structure on \(\mathsf{Ind}(\mathcal{C})\):

\begin{definition}\label{def:locally_split_exact}
Let \(\mathcal{C}\) be an additive category with kernels and cokernels. We say an extension \(K \into E \onto Q\) in \(\mathsf{Ind}(\mathcal{C})\) is \textit{ind-locally split} if for every \(X \in \mathcal{C}\), the induced sequence \[\Hom_{\mathsf{Ind}(\mathcal{C})}(X,K) \into \Hom_{\mathsf{Ind}(\mathcal{C})}(X,E) \onto \Hom_{\mathsf{Ind}(\mathcal{C})}(X,Q)\] is a short exact sequence of abelian groups.  Equivalently, every morphism \(X \to Q\) with \(X\in \cC\) in \(\mathsf{Ind}(\mathcal{C})\) lifts to a morphism \(X \to E\).
\end{definition}

\lem\label{lem:ind-locally-split-exact}
Let \(\cC\) be a quasi-abelian category. Then the ind-locally split extensions of Definition \ref{def:locally_split_exact} form an exact category structure on \(\mathsf{Ind}(\cC)\).
\endlem

\begin{proof}
The identity map on the zero object is clearly a deflation. To check that the composition of deflations is a deflation, consider two such deflations \(X \onto Y\) and \(Y \onto Z\). Then their composition is a cokernel. Now let \(P \to Z\) be a morphism, where \(P \in \cC\). Since \(Y \onto Z\) is a deflation, there is a lifting \(P \to Y\), and since \(X \onto Y\) is a deflation, there is a lifting \(P \to X\), as required. We now check that the pullback of a deflation \(E \onto Q\) by an arbitrary morphism \(Q' \to Q\) is a deflation. Since \(\cC\) is finitely complete, \(\mathsf{Ind}(\cC)\) has pullbacks. Let \(P\) denote the pullback of the maps \(E \onto Q\) and \(Q' \to Q\). The hypothesis that \(\cE\) is quasi-abelian implies that the resulting canonical map \(P \to Q'\) is a cokernel. To see that the map \(P \to Q'\) is locally split, consider a morphism \(X \to Q'\) where \(X \in \cE\). Composing with the map \(Q' \to Q\) and using that the original cokernel \(E \to Q\) was locally split, we obtain a lifting \(X \to E\). The existence of the required  lifting \(X \to P\) follows from the fact that \(P\) is a pullback. Pushouts are dealt with dually.   
\end{proof}

\begin{remark}[Indisation of an exact category]
In Lemma \ref{lem:ind-locally-split-exact}, the hypothesis that \(\cC\) is quasi-abelian is sufficient to show that the pullback of a cokernel is a cokernel. This is also the situation that is most relevant for the purposes of the article. However, more generally, if \(\cC\) is any small exact category, we can define the category \(\mathsf{Lex}(\mathcal{C}^\op, \mathsf{Mod}_\Z)\) of left exact functors on \(\mathcal{C}\). This is an abelian category (see Section 3  of \cite{braunling2016tate}). The filtered cocompletion of the image of the Yoneda embedding \(\mathcal{C} \subseteq \mathsf{Lex}(\mathcal{C}^\op, \mathsf{Mod}_\Z)\) is precisely the category of inductive systems \(\mathsf{Ind}(\mathcal{C})\). Here by filtered cocompletion, we mean those functors in \(\mathsf{Lex}(\cC^\op, \mathsf{Mod}_\Z)\) which are direct limits of representable functors.  Furthermore, \(\mathsf{Ind}(\mathcal{C})\) is an extension closed subcategory of \(\mathsf{Lex}(\mathcal{C}^\op, \mathsf{Mod}_\Z)\). Using this identification, it is shown in \cite[Proposition 4.8]{kelly2021analytic} that \(K \into E \onto Q\) is a conflation in \(\mathsf{Ind}(\mathcal{C})\) if and only if it can be represented by a diagram \((K_i \into E_i \onto Q_i)_{i \in I}\) of conflations in \(\mathcal{C}\) for a filtered category \(I\). This is called the \textit{indisation of an exact category}.
\end{remark}

We now describe an exact category structure on \(\overleftarrow{\mathcal{E}}\) which takes this internal exact structure on \(\cE\) into account. 

\begin{definition}\label{def:internal-external-pro}
Let \(\mathcal{E}\) be an additive category with kernels and cokernels and \(J\) a full subcategory. We call an extension \(K \into E \onto Q\) in \(\overleftarrow{\mathcal{E}}\) \textit{locally split relative to \(J\)} if the induced diagram \[\mathrm{Hom}_{\overleftarrow{\mathcal{E}}}(X,K) \into \mathrm{Hom}_{\overleftarrow{\mathcal{E}}}(X,E) \onto \mathrm{Hom}_{\overleftarrow{\mathcal{E}}}(X,Q)\] is an extension of abelian groups for all \(X \in J\).
\end{definition}

\begin{lemma}\label{lem:internal-pro-exact}
Let \(\mathcal{E}\) be quasi-abelian and \(J\) a full subcategory. The locally split extensions relative to \(J\) yield an exact category structure on \(\overleftarrow{\mathcal{E}}\). 
\end{lemma}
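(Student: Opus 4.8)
The plan is to leverage the fact, already established in Lemma \ref{lem:quasi-abelian-pro}, that \(\overleftarrow{\mathcal{E}}\) is itself quasi-abelian. Consequently \(\overleftarrow{\mathcal{E}}\) carries the exact structure whose conflations are \emph{all} kernel-cokernel pairs; in particular it is finitely complete and finitely cocomplete, the identity of the zero object is a deflation, and the inflations (strict monomorphisms) and deflations (strict epimorphisms) are already closed under composition and stable under pushout and pullback respectively. The locally split extensions relative to \(J\) form a subclass of these kernel-cokernel pairs, so the strategy is to verify that imposing the extra \(J\)-local condition destroys none of the exact-category axioms.

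First I would record a reformulation. Since \(\mathrm{Hom}_{\overleftarrow{\mathcal{E}}}(X,-)\) preserves kernels for every object \(X\), the sequence \(\mathrm{Hom}(X,K) \to \mathrm{Hom}(X,E) \to \mathrm{Hom}(X,Q)\) is automatically left exact whenever \(K \into E \onto Q\) is a kernel-cokernel pair. Hence \emph{locally split relative to \(J\)} is equivalent to the single lifting condition: for every \(X \in J\) and every morphism \(X \to Q\) in \(\overleftarrow{\mathcal{E}}\) there is a factorisation through \(E \onto Q\). This is exactly the form of the condition used in the proof of Lemma \ref{lem:ind-locally-split-exact}, and it reduces the verification to a diagram chase.

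With this in hand the axioms are checked as in Lemma \ref{lem:ind-locally-split-exact}. The identity of the zero object is a deflation, the lifting condition being vacuous. Given composable deflations \(f \colon X \onto Y\) and \(g \colon Y \onto Z\), the composite \(g \circ f\) is again a strict epimorphism by the quasi-abelian structure on \(\overleftarrow{\mathcal{E}}\), and any \(P \to Z\) with \(P \in J\) lifts first along \(g\) and then along \(f\), providing a lift along \(g \circ f\). For a deflation \(E \onto Q\) and an arbitrary \(Q' \to Q\), the pullback \(P\) exists and \(P \to Q'\) is a strict epimorphism because \(\overleftarrow{\mathcal{E}}\) is quasi-abelian; a morphism \(X \to Q'\) with \(X \in J\) is lifted by composing with \(Q' \to Q\), lifting along \(E \onto Q\), and invoking the universal property of \(P\) to produce \(X \to P\). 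Pushouts of inflations are handled analogously, using that the pushout preserves the cokernel, so that the required lift is obtained by lifting along the original deflation and composing with the structure map into the pushout.

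I expect the main point requiring care to be purely formal rather than a genuine obstacle: one must check that each lift produced into \(E\) (respectively out of the pushout) is compatible with the relevant universal property, so that it descends to a lift into \(P\) (respectively extends over the pushout) and still covers the given map. Everything else, including existence of the pullbacks and pushouts and the fact that the underlying maps are strict monomorphisms or epimorphisms, is inherited for free from the quasi-abelian structure of \(\overleftarrow{\mathcal{E}}\).
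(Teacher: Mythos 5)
Your proposal is correct and takes essentially the same approach as the paper: it reduces the locally split condition to the lifting property for morphisms out of objects of \(J\) (left exactness being automatic since \(\Hom(X,-)\) preserves kernels), inherits the strict epi/mono statements and the existence of pullbacks and pushouts from the quasi-abelian structure of \(\overleftarrow{\mathcal{E}}\) (Lemma \ref{lem:quasi-abelian-pro}), and then runs the same diagram chases as the paper, including the key pushout step where the lift of \(X \to Q\) is produced from the original deflation \(E \onto Q\) and composed with the structure map \(E \to P\). There are no gaps.
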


\begin{proof}
The identity map is clearly a deflation. To see that the composition of deflations is a deflation, let \(E \onto Q\) and \(Q \onto L\) be two such deflations, and let \(X \to L\) be a morphism with \(X \in J\). Since \(Q \onto L\) is a deflation, we get a lifting \(X \to Q\), and since \(E \onto Q\) is a deflation, we get the required lifting \(X \to E\) of the composition \(E \to L\). To see that the pullback of a deflation \(E \onto Q\) by an arbitrary map \(L \to Q\) is a deflation, we first note that the pullback \(P \to L\) is a cokernel as \(\overleftarrow{\cE}\) is quasi-abelian. Now suppose \(X \to L\) is a morphism, where \(X \in J\). Then the composition \(X \to L  \to Q\) has a lifting to \(E\) since \(E \onto Q\) is a deflation. By the universal property of pullbacks, we get the required lifting \(X \to P\). Now let \(K \into E\) be an inflation and \(K \to L \) be an arbitrary morphism in \(\overleftarrow{\cE}\). Then the pushout \(P\) is a kernel again as \(\overleftarrow{\cE}\) is quasi-abelian, with cokernel \(Q\). That is, we have a kernel-cokernel pair \(L \into P \onto Q\). Consider a map \(X \to Q\) with \(X \in J\). Then since \(E \onto Q\) is a deflation, there is a lifting \(X \to E\), whose composition with the canonical map \(E \to P\) yields the desired lifting.   
\end{proof}

We now combine the relative locally split exact structure with the ind-locally split exact structure on \(\cE = \mathsf{Ind}(\mathcal{C})\). More concretely, we have the following:

\begin{proposition}\label{prop:locally-split-specialised}
Let \(\cC\) be an additive category with kernels and cokernels. Then an extension \(K \into E \onto Q\) in \(\overleftarrow{\mathsf{Ind}(\cC)}\) is locally split relative to \(\cC\) if and only if it is isomorphic to a diagram \((K_n \into E_n \onto Q_n)_{n \in \mathbb{N}}\) of ind-locally split extensions.
\end{proposition}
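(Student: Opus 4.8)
The plan is to prove both implications by strictifying morphisms of countable pro-objects, reducing the comparison to a levelwise one, and then transferring the lifting criteria of Definitions~\ref{def:locally_split_exact} and~\ref{def:internal-external-pro} between the level and the pro-level. The tool I would rely on is that, since $\cC$ and hence $\mathsf{Ind}(\cC)$ have kernels and cokernels, every morphism of countable projective systems is, after passing to a cofinal reindexing, isomorphic to a level morphism, and that for a level morphism both kernel and cokernel in $\overleftarrow{\mathsf{Ind}(\cC)}$ are computed entrywise. Consequently any extension $K \into E \onto Q$ in $\overleftarrow{\mathsf{Ind}(\cC)}$ is isomorphic to one presented by a level diagram $(K_n \into E_n \onto Q_n)_{n \in \mathbb{N}}$ whose rows are kernel--cokernel pairs in $\mathsf{Ind}(\cC)$; this is the projective analogue of the representation of conflations in $\mathsf{Ind}(\cC)$ recalled after Lemma~\ref{lem:ind-locally-split-exact}. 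The remaining task is to match the local splitting of the pro-extension relative to $\cC$ with the ind-local splitting of the individual rows.

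For the implication from right to left, assume the extension is isomorphic to such a level diagram with ind-locally split rows and fix $X \in \cC$. By Definition~\ref{def:locally_split_exact}, applying $\Hom_{\mathsf{Ind}(\cC)}(X,-)$ to the $n$-th row gives a short exact sequence $0 \to \Hom(X,K_n) \to \Hom(X,E_n) \to \Hom(X,Q_n) \to 0$, and these assemble into a short exact sequence of countable towers of abelian groups. Since $\Hom_{\overleftarrow{\mathsf{Ind}(\cC)}}(X,-) = \varprojlim_n \Hom_{\mathsf{Ind}(\cC)}(X,-)$ for $X \in \cC$, I would apply $\varprojlim_n$: its left exactness identifies at once $\Hom_{\overleftarrow{\mathsf{Ind}(\cC)}}(X,K)$ with the kernel of $\Hom_{\overleftarrow{\mathsf{Ind}(\cC)}}(X,E) \to \Hom_{\overleftarrow{\mathsf{Ind}(\cC)}}(X,Q)$, which is half of what Definition~\ref{def:internal-external-pro} demands. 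The surjectivity of $\Hom_{\overleftarrow{\mathsf{Ind}(\cC)}}(X,E) \onto \Hom_{\overleftarrow{\mathsf{Ind}(\cC)}}(X,Q)$ is where I expect the main obstacle. A compatible family of maps $X \to Q_n$ lifts to some $X \to E_n$ at every level by ind-local splitting, but the level lifts need not respect the transition maps; their failure to be compatible is a cocycle in $\varprojlim^1_n \Hom(X,K_n)$, and surjectivity is equivalent to the vanishing of the resulting connecting map. The way I would clear this is to exploit the freedom in the level presentation: strictifying and reindexing the diagram so that the transition maps of the kernel tower become well behaved (a Mittag-Leffler condition) forces $\varprojlim^1_n \Hom(X,K_n)$ to vanish, after which the level lifts can be corrected to a compatible family and the lift $X \to E$ is produced.

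For the converse implication I would run the same reduction in reverse: starting from a level presentation of a pro-extension that is locally split relative to $\cC$, I must show that each row is ind-locally split, that is, that every $X \to Q_n$ with $X \in \cC$ lifts to $E_n$. Here the compactness of the objects of $\cC$ in $\mathsf{Ind}(\cC)$ ensures that such a morphism is already detected at a finite stage, so that after a further cofinal reindexing the pro-lifting property guaranteed by local splitting relative to $\cC$ can be promoted to the required entrywise lifting. Reconciling the two reindexings, and checking that the intermediate Mittag-Leffler arrangement is compatible with the ind-local structure coming from the compact generators in $\cC$, is the delicate bookkeeping that makes the equivalence precise.
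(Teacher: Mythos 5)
Your overall skeleton---strictify the extension to a level diagram \((K_n \into E_n \onto Q_n)_{n\in\N}\) and then transfer the lifting conditions between the pro-level and the levels---is the same as the paper's, which obtains the level representation from the proof of \cite[Proposition 4.3.13]{Cortinas-Meyer-Mukherjee:NAHA}. But the step you yourself single out as the main obstacle is where your argument genuinely breaks, and your proposed repair cannot work. You suggest killing the \(\varprojlim^1\) obstruction ``by strictifying and reindexing the diagram so that the transition maps of the kernel tower become well behaved (a Mittag-Leffler condition)''. Reindexing along a cofinal subsystem, or any other change of presentation, produces an \emph{isomorphic} object of \(\overleftarrow{\mathsf{Ind}(\cC)}\); it therefore changes neither the groups \(\Hom_{\overleftarrow{\mathsf{Ind}(\cC)}}(X,K)\), \(\Hom_{\overleftarrow{\mathsf{Ind}(\cC)}}(X,E)\), \(\Hom_{\overleftarrow{\mathsf{Ind}(\cC)}}(X,Q)\) nor the surjectivity of the second map, and \(\varprojlim\) and \(\varprojlim^1\) of a countable tower of abelian groups are themselves invariant under passage to cofinal subtowers. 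So no choice of presentation can ``force'' Mittag-Leffler: the obstruction either vanishes for intrinsic reasons or it does not---and you would in any case need the vanishing for \emph{all} \(X \in \cC\) simultaneously. That the incompatibility of levelwise lifts is a real phenomenon, not a bookkeeping artifact, is already visible for towers of split extensions of abelian groups with triangular transition maps (say \(K_n = \Z\) with transitions given by multiplication by \(2\)): there a strictly compatible family downstairs can carry a nontrivial class in \(\varprojlim^1\), so whatever makes the lifting work must come from the specific way the definitions interact, not from reindexing.

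The paper's proof takes a different and shorter route that never passes through \(\varprojlim^1\): after the level representation, it matches Definition~\ref{def:internal-external-pro} against Definition~\ref{def:locally_split_exact} directly, using two observations---that a morphism from a constant pro-system \(X\) into \(Q\) is exactly a compatible inverse system of morphisms \(X \to Q_n\), and that ind-local splitness of the row at level \(n\) is detected on the canonical maps \(Q_{n,i} \to Q_n\) of the ind-presentation, since every morphism \(X \to Q_n\) with \(X \in \cC\) factors through some \(Q_{n,i}\). Your converse direction has a matching soft spot: the factorisation of \(X \to Q_n\) through a stage \(Q_{n,i}\) is fine (this is your ``compactness'' remark, and it is how the paper reduces to the canonical maps), but the phrase ``the pro-lifting property guaranteed by local splitting relative to \(\cC\) can be promoted to the required entrywise lifting'' supplies no mechanism: the hypothesis only lifts morphisms into the pro-object \(Q\), whereas a morphism into a single level \(Q_n\) does not extend to one into \(Q\) (the structure maps point the wrong way), so the hypothesis cannot be applied to it as it stands. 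In short, you correctly located the delicate point---which the paper's own three-line proof treats very briskly---but both the Mittag-Leffler repair and the ``promotion'' step would fail as written, and the argument needs to be run against the canonical maps of the ind- and pro-presentations as in the paper rather than by correcting an incompatible family of lifts.
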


\begin{proof}
By the proof of \cite[Proposition 4.3.13]{Cortinas-Meyer-Mukherjee:NAHA}, an extension \(K \into E \onto Q\) in \(\overleftarrow{\mathsf{Ind}(\cC)}\) is isomorphic to a diagram of extensions \(K_n \into E_n \onto Q_n\) in \(\mathsf{Ind}(\cC)\). Being locally split relative to \(\cC\) means in particular that for each \(i\) in the indexing category of \(Q_n\), the canonical map \(Q_{n,i} \to Q_n\) has a lifting to \(E_n\). Combining this with the fact that for a constant pro-system \(X\), a morphism \(X \to Q\) is an inverse system of morphisms \(X \to Q_n\) implies the result.  
\end{proof}

Let \(\mathcal{E}\) be an exact category. An object \(Z \in \cE\) is said to be \textit{relatively injective} if for any inflation \(f \colon X \into Y\), the induced map \(\mathrm{Hom}_{\mathcal{E}}(f,Z) \colon \mathrm{Hom}_{\mathcal{E}}(Y,Z) \onto \mathrm{Hom}_{\mathcal{E}}(X,Z)\) is a surjection of abelian groups. An exact category is said to have \textit{enough injectives} if for any \(X \in \mathcal{E}\), there is an inflation \(X \into Z\), where \(Z\) is relatively injective. Dually, one defines \textit{projective objects} relative to the exact category structure on \(\mathcal{E}\) as those objects \(P\) for which \(\Hom(P,-)\) maps a deflation in \(\mathcal{E}\) to a surjection of abelian groups.

\begin{lemma}\cite[Proposition 7.3.2]{Prosmans:Derived_limits}\label{lem:enough-injectives}
Let \(\cE\) be a quasi-abelian category with enough injectives. Then \(\overleftarrow{\cE}\) has enough injectives. Dually, if \(\cE\) has enough projectives, then \(\mathsf{Ind}(\cE)\) has enough projectives. 
\end{lemma}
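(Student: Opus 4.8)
The two assertions are dual to one another: a filtered colimit in \(\cE\) is a cofiltered limit in \(\cE^\op\), so \(\mathsf{Ind}(\cE)\) is (the opposite of) a category of pro-systems over \(\cE^\op\), a projective object of \(\mathsf{Ind}(\cE)\) is an injective object of the pro-category, and ``\(\cE\) has enough projectives'' for \(\cE\) is ``\(\cE^\op\) is quasi-abelian with enough injectives''. Thus the plan is to prove the first statement and obtain the second by applying it to \(\cE^\op\), reading the construction below with products replaced by coproducts, kernels by cokernels, and the index \(\N\) by an arbitrary filtered category; so I concentrate on showing that \(\overleftarrow{\cE}\) has enough injectives. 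Recall that \(\overleftarrow{\cE}\) is quasi-abelian by Lemma \ref{lem:quasi-abelian-pro}, is countably complete, and carries the exact structure of all kernel--cokernel pairs.

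First I would record two stability facts. If \(I\in\cE\) is injective, then the constant system \(\underline I\) is injective in \(\overleftarrow{\cE}\): using \(\Hom_{\overleftarrow{\cE}}(Y,\underline I)=\varinjlim_m\Hom_{\cE}(Y_m,I)\) and the fact (see \cite{Prosmans:Derived_limits}) that an inflation \(K\into E\) of \(\overleftarrow{\cE}\) is represented, up to reindexing, by a levelwise inflation \((K_m\into E_m)_m\), the functor \(\Hom_{\overleftarrow{\cE}}(-,\underline I)\) carries \(K\into E\) to the filtered colimit of the surjections \(\Hom_{\cE}(E_m,I)\onto\Hom_{\cE}(K_m,I)\), hence to a surjection. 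Secondly, a countable product \(\prod_n Z_n\) of injectives is injective, since \(\Hom_{\overleftarrow{\cE}}(-,\prod_n Z_n)=\prod_n\Hom_{\overleftarrow{\cE}}(-,Z_n)\) sends an inflation to a product of surjections, which is again a surjection.

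Now, given \(X=(X_n)_{n\in\N}\in\overleftarrow{\cE}\), I would choose for each \(n\) an inflation \(u_n\colon X_n\into I_n\) with \(I_n\) injective in \(\cE\), and assemble these into the pro-object \(\bar I\) with \(\bar I_m=\prod_{n\le m}I_n\) and transition maps the projections. This \(\bar I\) is the product \(\prod_n\underline{I_n}\), hence injective by the two facts above. The local embeddings \(u_n\) need not be compatible with the transition maps \(t_{m,n}\colon X_m\to X_n\), but they do assemble into a level morphism \(j\colon X\to\bar I\) whose \(m\)-th component \(X_m\to\prod_{n\le m}I_n\) has \(n\)-th coordinate \(u_n\circ t_{m,n}\). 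Each component \(j_m\) is an inflation in \(\cE\): its coordinate for \(n=m\) is the inflation \(u_m\), and one factors \(j_m\) as the split inflation \((\mathrm{id},(u_n t_{m,n})_{n<m})\) followed by \(u_m\oplus\mathrm{id}_{\prod_{n<m}I_n}\), a direct sum of an inflation with an identity. Thus \(j\) is a levelwise inflation.

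The step requiring care, and the \emph{main obstacle}, is the passage from levels to the pro-category: one must verify that this levelwise inflation \(j\) is a genuine inflation of \(\overleftarrow{\cE}\), i.e.\ that \(j\) is the kernel of its cokernel \emph{computed in} \(\overleftarrow{\cE}\) and that the resulting deflation is admissible. This is exactly where quasi-abelianness is used, through the levelwise computation of kernels and cokernels of strict level morphisms in \(\overleftarrow{\cE}\) underlying Lemma \ref{lem:quasi-abelian-pro} (see \cite{Prosmans:Derived_limits}, and compare the strict reindexing of pro-morphisms of \cite[Proposition 4.3.13]{Cortinas-Meyer-Mukherjee:NAHA}); it shows that \(X\into\bar I\onto\coker(j)\) is a conflation. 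Granting this, \(X\into\bar I\) is an inflation into an injective, so \(\overleftarrow{\cE}\) has enough injectives, and applying the conclusion to \(\cE^\op\) yields the dual statement that \(\mathsf{Ind}(\cE)\) has enough projectives.
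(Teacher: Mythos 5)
Your argument for the pro half is essentially the paper's: the statement is delegated to \cite[Proposition 7.3.2]{Prosmans:Derived_limits}, whose proof is precisely your construction, and the same construction is redone inside the paper in the proof of Theorem \ref{thm:enough-injectives-pro} for the relative locally split structure (embedding into the fake product, injectivity of constant systems on injectives via \(\Hom_{\overleftarrow{\cE}}(Y,\underline I)=\varinjlim_m\Hom_\cE(Y_m,I)\) and level representation of inflations, injectivity of the product). Your two flagged steps are exactly the right ones for countable systems: extensions in \(\overleftarrow{\cE}\) admit levelwise representatives (compare \cite[Proposition 4.3.13]{Cortinas-Meyer-Mukherjee:NAHA}), and kernels and cokernels of level morphisms are computed levelwise, which is the content behind Lemma \ref{lem:quasi-abelian-pro}; granting these, your levelwise strict mono \(j\) is the kernel of its levelwise cokernel, hence an inflation, and the first half is complete and correct.

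The soft spot is the dual half. The duality \(\mathsf{Ind}(\cE)\cong(\mathsf{Pro}(\cE^\op))^\op\) is fine in principle, but in this paper \(\overleftarrow{\cE}\) means \emph{countable} (\(\N\)-indexed) projective systems, while \(\mathsf{Ind}(\cE)\) allows \emph{arbitrary} small filtered indices, so you cannot simply apply the proven statement to \(\cE^\op\): you must redo the construction for arbitrary cofiltered shapes (this is what Prosmans's general statement provides). And there your dictionary ``replace \(\N\) by an arbitrary filtered category'' breaks as written: the partial products \(\prod_{n\le m}\) dualize to coproducts over all predecessors of an index, which need not be finite — a general filtered category is not even a poset, and its objects can have infinitely many predecessors, so those coproducts need not exist in the additive category \(\cE\). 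The standard repair is to use the fake coproduct indexed by \emph{finite subsets} \(S\) of the objects of the index category, \(``\bigoplus"_i P_i \defeq (\bigoplus_{i\in S}P_i)_{S\ \mathrm{finite}}\), which requires only finite coproducts and is the categorical coproduct of the constant ind-objects \(\underline{P_i}\) (finite products of abelian groups commute with the filtered colimits in the \(\Hom\)-formula); alternatively, invoke the reindexing fact that every filtered category admits a cofinal functor from a directed poset in which each element has only finitely many predecessors (cf.\ \cite{braunling2016tate}). Either way, you must then still represent the canonical map \(``\bigoplus"_i P_i\to X\) by a level morphism that is levelwise a strict epi before concluding it is a deflation — the dual of your ``main obstacle'' step, which your sketch elides in this half. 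With that supplied, your proof closes and agrees with the cited one.
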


Lemma \ref{lem:enough-injectives} only provides information about injective objects for filtered cocompletions and pro-completions of quasi-abelian categories. In our main applications, however, the relevant exact structure on \(\cE = \mathsf{Ind}(\cC)\) is the ind-locally split exact structure.

\begin{lemma}\cite[Proposition 4.8]{kelly2021analytic}\label{lem:ind-C-enough-injectives}
Let \(\mathcal{C}\) be an exact category with kernels and cokernels, and enough projective objects. Then \(\mathsf{Ind}(\mathcal{C})\) is an elementary exact category for the indisation of the exact category structure of $\cC$. 
\end{lemma}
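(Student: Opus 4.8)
The plan is to verify, one by one, the conditions in Kelly's definition of an elementary exact category \cite{kelly2021analytic}: that \(\mathsf{Ind}(\cC)\), equipped with the indisation of the exact structure of \(\cC\), is cocomplete, has exact filtered colimits, and carries a set of compact projective objects that generate. All three conditions rest on the description of conflations furnished by the indisation, together with the basic calculus of the filtered cocompletion.

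Cocompleteness and exactness of filtered colimits come first, and these are the routine part. Since \(\cC\) is additive with kernels and cokernels it has all finite colimits, so its filtered cocompletion \(\mathsf{Ind}(\cC)\) is cocomplete; in particular it admits all small coproducts. For exactness, recall from the indisation that a sequence \(K \into E \onto Q\) is a conflation in \(\mathsf{Ind}(\cC)\) exactly when it is isomorphic to a filtered colimit \((K_i \into E_i \onto Q_i)_{i \in I}\) of conflations of \(\cC\). Consequently a filtered colimit of conflations of \(\mathsf{Ind}(\cC)\) is again representable by a filtered diagram of conflations of \(\cC\), hence is itself a conflation; and since a coproduct is the filtered colimit of its finite subcoproducts, which are conflations by additivity, coproducts of conflations are conflations as well.

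The substantive step is the production of compact projective generators. For \(X \in \cC\) write \(c(X) \in \mathsf{Ind}(\cC)\) for the constant one-term ind-object. The Hom-formula of \(\mathsf{Ind}\) gives \(\mathrm{Hom}_{\mathsf{Ind}(\cC)}(c(X), -) \cong \varinjlim_j \mathrm{Hom}_{\cC}(X, (-)_j)\), so \(c(X)\) is compact, i.e. the functor \(\mathrm{Hom}_{\mathsf{Ind}(\cC)}(c(X), -)\) preserves filtered colimits. If in addition \(X = P\) is projective in \(\cC\), then \(c(P)\) is projective for the indisation structure: any deflation of \(\mathsf{Ind}(\cC)\) is, up to isomorphism, a filtered colimit of deflations \(E_i \onto Q_i\) of \(\cC\), and \(\mathrm{Hom}_{\mathsf{Ind}(\cC)}(c(P), -)\) carries it, by compactness, to the filtered colimit of the surjections \(\mathrm{Hom}_{\cC}(P, E_i) \onto \mathrm{Hom}_{\cC}(P, Q_i)\), which remains surjective. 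Taking \(\cG\) to be a set of representatives of the \(c(P)\) with \(P\) projective then provides a set of compact projectives; that it generates (equivalently, that every object admits a deflation from a coproduct of members of \(\cG\)) is to be obtained by covering each component of an ind-object \(Y = \varinjlim_i c(Y_i)\) by a projective deflation \(P_i \onto Y_i\) of \(\cC\) and passing to the filtered colimit, invoking the exactness established above.

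The step I expect to be the genuine obstacle is this last generation claim, for two intertwined reasons. Mechanically, one must ensure that the covering map assembled from the \(c(P_i) \onto c(Y_i)\) is a deflation in the indisation structure and not merely an epimorphism of the underlying category, which is precisely where exactness of filtered colimits and of coproducts is used, and one must reconcile the \emph{detection of deflations} formulation of generation with the cokernel condition built into the indisation. Set-theoretically, the definition demands a genuine \emph{set} of generators, so one must take \(\cC\) essentially small (or otherwise bound the cardinality of the projective generators) while simultaneously keeping those generators compact. It is exactly the simultaneous control of compactness, projectivity, generation, and smallness that makes Proposition~4.8 of \cite{kelly2021analytic} the substantive input, and a complete proof would follow its argument.
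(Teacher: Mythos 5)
The first thing to say is that the paper contains no proof of this lemma at all: it is imported verbatim from \cite[Proposition 4.8]{kelly2021analytic}, so the only comparison available is with that cited source, whose natural skeleton your outline does reproduce in its first two steps. Cocompleteness of \(\mathsf{Ind}(\cC)\), the description of conflations in the indisation as filtered colimits of level conflations (modulo a reindexing lemma you elide), compactness of the constant ind-objects \(c(X)\) via the Hom-formula, and projectivity of \(c(P)\) for \(P\) projective in \(\cC\) are all correct and are indeed the routine part.

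The genuine gap is the generation step, which you yourself flag and then defer rather than close, and the mechanism you sketch for it would fail as written: the projective covers \(P_i \onto Y_i\) do \emph{not} form a diagram over the filtered index category \(I\), because a lift \(P_i \to P_j\) of the composite \(P_i \onto Y_i \to Y_j\) exists only non-canonically, and the choices need not respect composition; hence there is no ind-morphism \(\text{``}\varinjlim c(P_i)\text{''} \to Y\) to which one could ``pass to the filtered colimit.'' The repair requires no compatible lifts: factor the candidate cover as \(\bigoplus_{i \in I} c(P_i) \onto \bigoplus_{i \in I} c(Y_i) \to Y\). The first map is a coproduct of levelwise deflations, hence a deflation by your coproduct argument, and the canonical map \(\bigoplus_{i} c(Y_i) \to Y\) is a deflation because it is the filtered colimit, over pairs \((F,j)\) with \(F \subseteq I\) finite and \(j \geq F\) (ordered by \(F \cup \{j\} \subseteq F'\), \(j \leq j'\)), of the maps \((\sigma, \mathrm{id}) \colon \bigoplus_{i \in F} Y_i \oplus Y_j \to Y_j\), each of which is isomorphic to a direct-sum projection and so is a deflation in \(\cC\); one then concludes by composability of deflations. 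Separately, your proposed cure for the set-theoretic issue — assume \(\cC\) essentially small — is not available: the lemma carries no smallness hypothesis, and the paper applies it (via Corollary \ref{cor:ind-C-enough-injectives}) to \(\cC = \mathsf{Ban}_k\) with the split exact structure, a large category in which \emph{every} object is projective; Kelly's notion of elementary must be met by generation data compatible with such \(\cC\), which is exactly what the cited source arranges, so adding smallness would invalidate the paper's intended use. In short: the decisive step of your proof is missing, the specific colimit construction you propose for it is broken, and the auxiliary hypothesis you suggest would not be an admissible fix.
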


\begin{lemma}\label{lem:ind-enough-inj}
Suppose \(\mathcal{E}\) is an elementary exact category, then \(\mathcal{E}\) has enough (functorial) injectives. 
\end{lemma}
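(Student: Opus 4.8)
The plan is to obtain the injective embeddings from a single application of Quillen's small object argument, exploiting the structure available in an elementary exact category. By the defining axioms, $\mathcal{E}$ is additive and cocomplete, filtered colimits in $\mathcal{E}$ are exact, and $\mathcal{E}$ admits a set $\mathcal{G}$ of compact projective generators. Write $0$ for the zero object, and recall that an object $Z$ is relatively injective precisely when the canonical map $Z \to 0$ has the right lifting property against every inflation, i.e.\ when every lifting problem given by an inflation $A \into B$ together with a morphism $A \to Z$ admits a solution $B \to Z$.

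The first, and main, step is to replace the (proper) class of all inflations by a \emph{set} $S$ of inflations that detects injectivity. Fix a regular cardinal $\kappa$ for which every generator in $\mathcal{G}$ is $\kappa$-presentable; this is possible because the generators are compact. The accessibility built into the notion of an elementary exact category guarantees that every object is a $\kappa$-filtered colimit of $\kappa$-presentable objects and that every inflation is a $\kappa$-filtered colimit of inflations between $\kappa$-presentable objects. Let $S$ be a set of representatives for the essentially small class of inflations whose domain and codomain are $\kappa$-presentable. I would then verify that the objects right-orthogonal to $S$ are exactly the relative injectives: given an arbitrary inflation $A \into B$ and a morphism $A \to Z$ with $Z$ right-$S$-orthogonal, write $A \into B$ as a $\kappa$-filtered colimit of inflations $A_i \into B_i$ lying in $S$, use compactness of the $A_i$ to solve the resulting lifting problems coherently, and pass to the colimit to obtain the desired extension $B \to Z$.

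With $S$ in hand the remaining argument is formal. The domains of the maps in $S$ are $\kappa$-presentable, hence small relative to the class of transfinite composites of pushouts of $S$, so the small object argument applies and produces, functorially in every morphism, a factorization through its cellular and $S$-orthogonal parts. Applying it to the unique morphism $X \to 0$ yields a functorial factorization $X \overset{j_X}{\into} I(X) \to 0$ in which $j_X$ belongs to the saturation of $S$ and $I(X) \to 0$ is right-$S$-orthogonal, so that $I(X)$ is relatively injective by the previous step. Moreover $j_X$ is genuinely an inflation: it is a transfinite composite of pushouts of elements of $S$, and inflations are stable under pushout by the exact-category axioms and under transfinite composition because filtered colimits are exact (a transfinite composite of inflations is the filtered colimit of the intermediate conflations, hence again a conflation). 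Functoriality of the small object argument makes $X \mapsto (I(X), j_X)$ functorial, giving enough functorial injectives.

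The delicate point is the first step: producing the set $S$ and checking that $S$-orthogonality coincides with relative injectivity. This is exactly where the hypothesis \emph{elementary} is used, since it is the accessibility of the exact structure---every inflation being a $\kappa$-filtered colimit of inflations between compact objects---that simultaneously makes $S$ a set and lets one reduce, via compactness of the generators, lifting problems against arbitrary inflations to lifting problems against $S$.
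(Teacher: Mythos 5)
Your overall plan---cut the class of all inflations down to a generating set and run Quillen's small object argument on \(X \to 0\)---is precisely the mechanism behind the paper's proof, which just combines \cite[Lemma 3.3.54]{kelly2016homotopy} (an elementary exact category is an exact category of Grothendieck type) with \cite[Corollary 5.9]{stovicek2013exact} (such categories have enough functorial injectives, proved by the small object argument). Your treatment of the cellular half of the factorization is also sound: \(j_X\) is an inflation because inflations are stable under pushout, under coproducts, and under transfinite composition in this setting.

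The genuine gap is in your detection step, i.e.\ the claim that right-\(S\)-orthogonality implies relative injectivity, which is the heart of the lemma. First, the assertion that every inflation is a \(\kappa\)-filtered colimit, in the arrow category, of inflations between \(\kappa\)-presentable objects is not an axiom of elementary exact categories and does not follow from them; what the elementary hypothesis actually yields (this is the content of \cite[Lemma 3.3.54]{kelly2016homotopy}) is \emph{deconstructibility}: every object admits a well-ordered filtration by inflations whose subquotients lie in a fixed set. Second, and more seriously, even granting your accessibility claim, the phrase ``solve the resulting lifting problems coherently and pass to the colimit'' is exactly the point that fails: objectwise solvability of the lifting problems \(A_i \into B_i\) against \(Z\) gives no compatible system of lifts \(B_i \to Z\), and there is no colimit to take of incompatible choices. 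Injectivity with respect to a set of maps propagates to the \emph{saturation} of that set---pushouts, transfinite composites, retracts---but not to filtered colimits in the arrow category, and compactness of the \(A_i\) does not repair this, since \(Z\) is not assumed compact. The correct argument, as in \cite[Corollary 5.9]{stovicek2013exact}, replaces your filtered colimit by a well-ordered filtration of the cokernel of the given inflation \(A \into B\) and builds the extension \(B \to Z\) by transfinite recursion (Eklof's lemma), where coherence holds by construction because each lift extends the one already built at the previous stage. So your smallness and factorization mechanics are fine, but the identification of the \(S\)-orthogonal objects with the relative injectives is unproven as written, and the filtered-colimit route you propose for it cannot be made to work.
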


\begin{proof}
Combine \cite[Lemma 3.3.54]{kelly2016homotopy}\label{lem:elementary-Grothendieck-type} and \cite[Corollary 5.9]{stovicek2013exact}.
\end{proof}

\begin{corollary}\label{cor:ind-C-enough-injectives}
Let \(\mathcal{C}\) be a quasi-abelian category. Then \(\mathsf{Ind}(\mathcal{C})\) has enough (functorial) injectives for the ind-locally split exact structure.
\end{corollary}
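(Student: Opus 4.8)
The plan is to put on $\mathcal{C}$ its split exact structure (the one recalled just before Definition \ref{def:locally_split_exact}) and to run it through Lemmas \ref{lem:ind-C-enough-injectives} and \ref{lem:ind-enough-inj}, the only real work being to match the resulting indisation with the ind-locally split structure.

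First I would check the hypotheses of Lemma \ref{lem:ind-C-enough-injectives} for $\mathcal{C}$ with its split structure. Since $\mathcal{C}$ is quasi-abelian it is additive with kernels and cokernels, so only ``enough projectives'' needs comment. For the split structure every conflation $K \into E \onto Q$ splits, whence $E \onto Q$ is a split epimorphism and $\Hom_{\mathcal{C}}(P, -)$ carries every deflation to a surjection; thus every object of $\mathcal{C}$ is projective and $\mathcal{C}$ trivially has enough projectives. Lemma \ref{lem:ind-C-enough-injectives} then tells us that $\mathsf{Ind}(\mathcal{C})$, equipped with the indisation of the split structure, is elementary exact, and Lemma \ref{lem:ind-enough-inj} upgrades this to enough functorial injectives.

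It remains to identify the indisation of the split structure with the ind-locally split structure of Definition \ref{def:locally_split_exact}; since injectivity and ``enough injectives'' depend only on the class of conflations, it suffices to show the two classes coincide. By the characterisation recalled above (see \cite[Proposition 4.8]{kelly2021analytic}), an extension is a conflation for the indisation precisely when it is isomorphic to a filtered diagram $(K_i \into E_i \onto Q_i)_i$ of split conflations in $\mathcal{C}$. For one inclusion, each object $X \in \mathcal{C}$ is compact in $\mathsf{Ind}(\mathcal{C})$, so $\Hom_{\mathsf{Ind}(\mathcal{C})}(X, -)$ commutes with the filtered colimit; applied to such a diagram it yields the filtered colimit of the split, hence short exact, sequences $\Hom_{\mathcal{C}}(X, K_i) \into \Hom_{\mathcal{C}}(X, E_i) \onto \Hom_{\mathcal{C}}(X, Q_i)$, which is again short exact, so the extension is ind-locally split. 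Conversely, given an ind-locally split extension I would represent it as a filtered diagram of kernel--cokernel pairs in $\mathcal{C}$ (as in the proof of \cite[Proposition 4.3.13]{Cortinas-Meyer-Mukherjee:NAHA}) and use the lifting property of Definition \ref{def:locally_split_exact} --- every $Q_i \to Q$ factors through $E$ --- to produce, after a cofinal reindexing, splittings at finite level, exhibiting the extension as a filtered colimit of split conflations.

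I expect the converse inclusion of the last step to be the main obstacle: turning the ``global'' liftings $Q_i \to E$ supplied by ind-local splitness into a compatible system of honest splittings of the representing diagram requires a careful cofinal reindexing, and is exactly the incarnation of the principle that locally split extensions are the filtered colimits of split ones. The other inclusion and the verification of the projectivity hypothesis are routine.
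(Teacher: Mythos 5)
Your proposal is correct and takes essentially the same route as the paper: equip $\mathcal{C}$ with its split exact structure (under which every object is projective), feed it through Lemmas \ref{lem:ind-C-enough-injectives} and \ref{lem:ind-enough-inj}, and identify the resulting indisation with the ind-locally split structure of Definition \ref{def:locally_split_exact}. The only difference is that the paper disposes of that identification by citing \cite[Example 4.26]{kelly2021note}, whereas you sketch it directly; your outline of the converse direction (cofinal reindexing, using the liftings to split each level conflation, e.g.\ after pulling back along transition maps) is precisely the content of that citation and is sound.
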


\begin{proof}
We view \(\cC\) as an exact category with respect to the split exact structure, with respect to which every object is projective. Furthermore, the ind-locally split exact structure on \(\mathsf{Ind}(\mathcal{C})\) is the indisation of the split exact structure on \(\mathcal{C}\) (see \cite[Example 4.26]{kelly2021note}). So by Lemma \ref{lem:ind-C-enough-injectives}, it is elementary. The conclusion follows from Lemma \ref{lem:ind-enough-inj}.
\end{proof}

\begin{theorem}\label{thm:enough-injectives-pro}
Let \(\cC\) be a quasi-abelian category. Then \(\overleftarrow{\mathsf{Ind}(\cC)}\) with the locally split exact structure relative to \(\cC\) has enough injectives.
\end{theorem}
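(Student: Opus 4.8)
The plan is to reduce the statement to a general fact about pro-categories of exact categories and then prove the latter by an explicit construction modelled on the proof of Lemma \ref{lem:enough-injectives}. Write $\cD \defeq \mathsf{Ind}(\cC)$, equipped with the ind-locally split exact structure; by Corollary \ref{cor:ind-C-enough-injectives} it has enough injectives. By Proposition \ref{prop:locally-split-specialised}, the conflations of $\overleftarrow{\mathsf{Ind}(\cC)}$ for the locally split structure relative to $\cC$ are precisely the extensions isomorphic to inverse systems $(K_n \into E_n \onto Q_n)_{n\in\N}$ of ind-locally split conflations; that is, the exact structure is the levelwise one on $\overleftarrow{\cD}$. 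Thus it suffices to prove: if $\cD$ is an additive exact category with enough injectives, then $\overleftarrow{\cD}$ with the levelwise exact structure has enough injectives. This is the exact-category analogue of Lemma \ref{lem:enough-injectives}, and the construction below is a mild adaptation of the quasi-abelian case.

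Given $X = (X_n)_{n\in\N}$ in $\overleftarrow{\cD}$, with composite structure maps $q_{n,k}\colon X_n \to X_k$ for $k \le n$, I would first choose inflations $\iota_n\colon X_n \into I_n$ with each $I_n$ injective in $\cD$. I then set $J_n \defeq \prod_{k=0}^n I_k$, with transition maps the canonical projections $J_{n+1} \onto J_n$ forgetting the last factor; these projections are split deflations. The comparison map $f\colon X \to J$ is defined levelwise by letting the $k$-th component of $f_n\colon X_n \to J_n$ be $\iota_k \circ q_{n,k}$; a direct check shows these are compatible with the transition maps, so $f$ is a morphism of pro-systems.

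Next I would verify that $f$ is an inflation. Writing $J_n = I_n \oplus \prod_{k<n} I_k$, the map $f_n$ has the form $(\iota_n, h_n)$ with $h_n = (\iota_k q_{n,k})_{k<n}$, and it factors as the split inflation $(\mathrm{id}_{X_n}, h_n)\colon X_n \to X_n \oplus \prod_{k<n} I_k$ followed by $\iota_n \oplus \mathrm{id}$. Both factors are inflations in $\cD$ — the first is split, the second a direct sum of the ind-locally split inflation $\iota_n$ with an identity — so $f_n$ is an inflation, and taking levelwise cokernels exhibits $X \overset{f}{\into} J$ as the inflation part of a conflation. By Proposition \ref{prop:locally-split-specialised}, $f$ is then an inflation for the locally split structure relative to $\cC$.

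It remains to show that $J$ is injective, which is the heart of the argument. Since finite products commute with the filtered colimits defining pro-Hom sets, and since the tower defining $J$ has split projections as transition maps, for any $A \in \overleftarrow{\cD}$ one computes
\[
\Hom_{\overleftarrow{\cD}}(A, J) = \varprojlim_n \prod_{k \le n} \varinjlim_m \Hom_{\cD}(A_m, I_k) \cong \prod_{k\in\N} \Hom_{\overleftarrow{\cD}}(A, \Delta I_k),
\]
where $\Delta I_k$ denotes the constant pro-system on $I_k$; the inverse limit collapses to the full product precisely because the transition maps are split surjective, so there is no derived-limit obstruction. Hence $J \cong \prod_k \Delta I_k$, and, products of injectives being injective, it suffices to see that each $\Delta I$ with $I$ injective in $\cD$ is injective in $\overleftarrow{\cD}$. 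For this, given a levelwise inflation $A \into B$ and a map $A \to \Delta I$ represented by some $\phi\colon A_{m_0} \to I$, injectivity of $I$ in $\cD$ extends $\phi$ along the inflation $A_{m_0} \into B_{m_0}$ to a map $B_{m_0} \to I$, which represents the required extension $B \to \Delta I$. The main obstacle is exactly this last point — confirming that the tower $J$ is genuinely injective — and the feature that makes it work is the split-deflation structure of its transition maps, which simultaneously collapses the defining limit to a product and reduces injectivity to that of the constant systems.
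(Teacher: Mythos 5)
Your proposal is correct and takes essentially the same route as the paper: both arguments embed $X$ levelwise into relatively injective objects of $\mathsf{Ind}(\cC)$ supplied by Corollary \ref{cor:ind-C-enough-injectives}, assemble these into the fake product $\bigl(\prod_{k\le n} I_k\bigr)_{n\in\N}$ with projection transition maps, check that the comparison map is a levelwise (hence, by Proposition \ref{prop:locally-split-specialised}, locally split) inflation, and conclude by showing that this fake product of injectives is injective --- where the paper cites \cite[Lemma 7.3.1]{Prosmans:Derived_limits} and the argument of \cite[Lemma 2.2.2]{Cortinas-Valqui:Excision}, you verify the two ingredients (injectivity of constant systems $\Delta I$ and of products) directly, which is a cosmetic difference. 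One small inaccuracy in your justification, not affecting correctness: the identification $\varprojlim_n \prod_{k\le n} H_k \cong \prod_{k\in\N} H_k$ holds for any tower of partial products with the canonical projections, simply because it is a limit of sets computed along the product cone --- the splitness of the transition deflations plays no role there (it matters for exactness statements, not for this Hom-set computation).
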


\begin{proof}
Let \(X \in \overleftarrow{\mathsf{Ind}(\cC)}\). Then the map \(X \to ``\prod" X\) into the fake product is an inflation. It is a kernel by the proof of \cite[Proposition 7.3.2]{Prosmans:Derived_limits}. The resulting kernel-cokernel pair can be represented by the kernel-cokernel pairs \[(X_n \into \bigoplus_{i=0}^n X_i \onto \bigoplus_{i=0}^{n-1} X_i)_{n \in \N},\] where the coproduct is taking place in the category \(\mathsf{Ind}(\cC)\). Furthermore, the cokernel splits by the obvious inclusion into the first \(n-1\)-summands, which suffices to show that the cokernel \(\bigoplus_{i=0}^n X_i \onto \bigoplus_{i=0}^{n-1} X_i\) is ind-locally split.  

Now since \(\mathsf{Ind}(\cC)\) has enough injectives for the ind-locally split exact structure by Corollary \ref{cor:ind-C-enough-injectives}, for each \(n \in \mathbb{N}\), there is an inflation \(j_n \colon X_n \into I_n\), where \(I_n\) is relatively injective. We first observe that each \(I_n\) is relatively injective when we view it as a constant pro-object in \(\overleftarrow{\mathsf{Ind}(\cC)}\). This is because if \(X \into Y\) is an inflation in \(\overleftarrow{\mathsf{Ind}(\cC)}\), and \(X \to I_n\) an arbitrary morphism, then as \(X \into Y\) is in particular a kernel in \(\overleftarrow{\mathsf{Ind}(\cC)}\), by \cite[Lemma 7.3.1]{Prosmans:Derived_limits}, there is a lifting \(Y \to I_n\). Now since \(X_n \overset{j_n}\into I_n \onto \mathrm{coker}(j_n)\) is ind-locally split for each \(n\) by hypothesis, taking products in the pro-category \(\overleftarrow{\mathsf{Ind}(\cC)}\), we get a kernel-cokernel pair \[``\prod" X \into ``\prod" I \onto ``\prod" \coker(j_n),\] which is locally split. Finally, the fake product of a sequence of relatively injective objects in relatively injective by the same argument as the proof of \cite[Lemma 2.2.2]{Cortinas-Valqui:Excision}.  
\end{proof}

\section{From exact to model categories}\label{sec:model}

In this section, we show that under suitable conditions on an exact category \(\mathfrak{F}\), there is an induced closed model structure on \(\mathsf{Kom}(\mathfrak{F})\). We then specialise this to the exact category \(\overleftarrow{\mathsf{Ind}(\cE)}\) with the locally split exact structure relative to a quasi-abelian category \(\cE\). We call an object \(X \in \mathsf{Kom}(\mathfrak{F})\) \textit{fibrant} if at each degree \(n\), \(X_n\) is relatively injective. The model structure we desire is defined as follows:

\begin{definition}\label{def:injective-model-structure}
Let \(\mathfrak{F}\) be an exact category. The \textit{injective model category structure} on \(\mathsf{Kom}(\mathfrak{F})\), if it exists, is the model structure in which
\begin{itemize}
\item weak equivalences are the quasi-isomorphisms;
\item cofibrations are the degree-wise inflations;
\item fibrations are the degree-wise deflations with fibrant kernels.
\end{itemize}
\end{definition}

We now use the general machinery developed in \cite{kelly2016homotopy} to find conditions on an exact category under which the injective model structure exists. The results we need from \cite{kelly2016homotopy} that are stated for arbitrary unbounded chain complexes work verbatim for \(\Z/2\Z\)-periodic (unbounded) chain complexes. 

\begin{lemma}\label{lem:fibration-description}
Let \(\mathfrak{F}\) be a countably complete exact category with enough injectives. A morphism \(f \colon X \to Y\) in \(\mathsf{Ch}(\mathfrak{F})\)  is degree-wise a deflation in \(\mathfrak{F}\) and its kernel is fibrant if and only if it satisfies the right lifting property with respect to cofibrations that are weak equivalences. Here cofibrations and weak equivalences refer to degree-wise inflations and quasi-isomorphisms, respectively. 
\end{lemma}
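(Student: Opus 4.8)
The plan is to treat this as a specialization of Kelly's characterization of the fibrations in the injective model structure on unbounded complexes over an exact category (the cited results of \cite{kelly2016homotopy} being stated in exactly this generality and carrying over to the \(\Z/2\Z\)-periodic setting without change), while keeping track of which hypotheses on \(\mathfrak{F}\) the argument actually consumes. First I would record the shape of the maps on both sides. A degree-wise inflation \(i\colon A \into B\) is a quasi-isomorphism precisely when its degree-wise cokernel \(C\) is an exact complex, so the trivial cofibrations are exactly the degree-wise inflations with exact cokernel; note that \(0 \into C\) is such a map for \emph{every} exact \(C\). On the other side a degree-wise deflation \(f\colon X \to Y\) sits in a degree-wise conflation \(K \into X \overset{f}\onto Y\), and ``fibrant kernel'' means each \(K_n\) is relatively injective. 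With this dictionary I would prove the two implications separately.

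For the implication that a degree-wise deflation with fibrant kernel has the stated right lifting property, I would run the standard obstruction argument. Given a trivial cofibration \(i\colon A \into B\) with exact cokernel \(C\) and a commuting square given by \(a\colon A \to X\) and \(b\colon B \to Y\), first solve the lifting problem degreewise, ignoring differentials: forming the pullback \(P_n = X_n \times_{Y_n} B_n \onto B_n\), whose kernel is \(K_n\), relative injectivity of \(K_n\) lets one split this deflation and adjust the splitting to agree with \(a_n\) on \(A_n\), producing a graded \(h_n\colon B_n \to X_n\) with \(h_n i_n = a_n\) and \(f_n h_n = b_n\). The failure \(e = d^X h - h\,d^B\) of \(h=(h_n)\) to be a chain map lands in \(K\) in the target (since \(b\) is a chain map) and kills \(A\) (since \(a\) is), hence factors as a degree-one cycle \(\bar e\colon C \to K\) in \(\mathrm{HOM}_{\mathfrak{F}}(C,K)\). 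Correcting \(h\) by a graded map \(B \to K\) vanishing on \(A\) changes \(\bar e\) by a boundary, so the obstruction to promoting \(h\) to a chain-map lift is the class \([\bar e]\in H^1\bigl(\mathrm{HOM}_{\mathfrak{F}}(C,K)\bigr)\), which vanishes as soon as one knows \(\mathrm{HOM}_{\mathfrak{F}}(C,K)\) is acyclic for \(C\) exact and \(K\) fibrant.

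For the converse I would avoid searching for a generating set (there is none in this non-small, all-monomorphisms setting) and argue by retraction. Using enough injectives together with countable completeness — the latter to assemble a fibrant replacement of an unbounded complex via countable fake products, exactly as in the proof of Theorem \ref{thm:enough-injectives-pro} — one factors an arbitrary \(f\) as a trivial cofibration followed by a degree-wise deflation with fibrant kernel. If \(f\) has the right lifting property against trivial cofibrations, lifting against the trivial cofibration in its own factorization exhibits \(f\) as a retract of the second factor; since degree-wise deflations with fibrant kernel are closed under retracts, \(f\) is one.

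The hard part, and the only genuinely non-formal input, is the acyclicity of \(\mathrm{HOM}_{\mathfrak{F}}(C,K)\) for exact \(C\) and degreewise relatively injective \(K\) — equivalently, that a degreewise-injective complex is homotopically injective. This does not follow from additivity alone: over a self-injective algebra such as \(k[x]/x^2\) an \(x\)-periodic acyclic complex of injectives fails to be contractible, so this step must genuinely invoke the structural hypotheses on \(\mathfrak{F}\). I would therefore isolate it as a separate lemma and deduce it from the elementary/Grothendieck-type structure supplied by the earlier sections (Lemma \ref{lem:ind-enough-inj} and Corollary \ref{cor:ind-C-enough-injectives}) together with countable completeness, following \cite{kelly2016homotopy}; once this acyclicity and the \((\text{trivial cofibration},\text{fibration})\) factorization are in hand, the degreewise lift, the obstruction class, and the retract argument are routine bookkeeping.
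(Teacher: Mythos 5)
Your proposal is, in outline, exactly the hands-on unpacking of what the paper does by citation: the paper invokes Kelly's machinery \cite{kelly2016homotopy}, lifting the complete cotorsion pair \((\mathrm{Ob}(\mathfrak{F}),\mathsf{Inj}(\mathfrak{F}))\) to a cotorsion pair \((\widetilde{\mathrm{Ob}(\mathfrak{F})},\widetilde{\mathrm{dg}(\mathsf{Inj}(\mathfrak{F}))})\) on \(\mathsf{Ch}(\mathfrak{F})\), proving completeness by dualising Kelly's Theorem 4.3.58 (this is your factorisation step, and it consumes enough injectives plus countable completeness exactly as you say), and then using Kelly's Theorem 4.1.7 to get a compatible weak factorisation system, from which \(\mathcal{F}'=\mathcal{F}\) follows by mutual lifting --- your degreewise-splitting-plus-obstruction computation is the content of ``compatible'', and your retract argument is the standard half of the lifting characterisation. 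So the skeleton matches the paper's proof step for step.

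The genuine gap is the step you flagged, and it is worse than you allow: the acyclicity of \(\mathrm{HOM}_{\mathfrak{F}}(C,K)\) for \(C\) exact and \(K\) merely \emph{degreewise} relatively injective is not a hard lemma awaiting the right structural input --- it is \emph{false under precisely the hypotheses of the lemma}, and your own example shows this. The category \(\mathsf{Mod}_{k[x]/x^2}\) is Grothendieck abelian, hence countably complete with enough functorial injectives, so it satisfies every hypothesis in sight, including the elementary/Grothendieck-type conditions of Lemma \ref{lem:ind-enough-inj} and Corollary \ref{cor:ind-C-enough-injectives} that you propose to invoke; your suggested rescue therefore cannot succeed. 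Concretely, let \(K\) be the \(2\)-periodic complex with all entries \(k[x]/x^2\) and differential \(x\) (this lives in \(\mathsf{Kom}\) as well, so \(\Z/2\Z\)-periodicity does not help): it is exact with relatively injective entries, so \(K \to 0\) is a degreewise deflation with ``fibrant'' kernel in the sense of the paper's definition, yet it fails the right lifting property against the trivial cofibration \(K \into \mathsf{cone}(\mathrm{id}_K)\), since a lift would exhibit \(K\) as a retract of a contractible complex, while a contraction \(xa+bx=1\) is unsolvable in \(k[x]/x^2\). The conclusion is that the lemma is only correct when ``fibrant kernel'' is read as ``kernel in Kelly's class \(\mathrm{dg}(\mathsf{Inj}(\mathfrak{F}))\)'', i.e.\ degreewise relatively injective \emph{and} every chain map from an exact complex null-homotopic (equivalently, the acyclicity you need holds by definition); this is the class that the paper's proof actually manipulates via Kelly's Definition 4.2.18, Corollary 4.2.25 and Theorem 4.1.7, even though the paper's prose gloss (``complexes with relatively injective terms'', and its Definition of fibrant object) shares the same imprecision you inherited. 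With that corrected reading, your forward direction holds (the obstruction class dies by definition of \(\mathrm{dg}(\mathsf{Inj}(\mathfrak{F}))\)), your factorisation-plus-retract converse goes through (retract-closure of deflations uses weak idempotent completeness, which countable completeness supplies), and the two proofs coincide; whether degreewise injectivity suffices in the locally split settings the paper actually applies this to is a separate question that neither your argument nor the paper's addresses.
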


\begin{proof}
We use some terminology and notation from \cite[Section 4]{kelly2016homotopy}.  Let \(\mathcal{F}\) denote the class of morphisms in \(\mathsf{Ch}(\mathfrak{F})\) that satisfy the right lifting property with respect to cofibrations that are weak equivalences, and let \(\mathcal{F}'\) denote the class of morphisms that are degreewise deflations in \(\mathfrak{F}\), and whose kernels are relatively injective. Let \(\mathcal{C}\) and \(\mathcal{W}\) denote the class of cofibrations and weak equivalences, respectively. Since \(\mathfrak{F}\) has enough injectives, the pair \((\mathrm{Ob}(\mathfrak{F}), \mathsf{Inj}(\mathfrak{F}))\) of all objects and relatively injective objects is a complete cotorsion pair on \(\mathfrak{F}\) in the sense of \cite[Definitions 4.1.2 and 4.1.3]{kelly2016homotopy}. By \cite[Corollary 4.2.25]{kelly2016homotopy}, the pair  

\[(\widetilde{\mathrm{Ob}(\mathfrak{F})}, \widetilde{\mathrm{dg}(\mathsf{Inj}(\mathfrak{F}))})\] of exact chain complexes and complexes with relatively injective terms, as defined in \cite[Definition 4.2.18]{kelly2016homotopy} is a cotorsion pair. To see that it is complete, one simply dualises the proof of \cite[Theorem 4.3.58]{kelly2016homotopy}. Here the presence of enough injectives and countable completeness is used to prove the existence of \(\mathrm{dg}\)-injective resolutions. By \cite[Theorem 4.1.7]{kelly2016homotopy}, the pair \((\mathsf{Infl}(\widetilde{\mathrm{Ob}(\mathfrak{F})}), \mathsf{Defl}(\widetilde{\mathrm{dg}(\mathsf{Inj}(\mathfrak{F}))}))\) defined by degreewise inflations in \(\mathsf{Ch}(\mathfrak{F})\) with exact cokernels, and deflations with fibrant kernels, is a compatible weak factorisation system. It is equal to the pair \((\mathcal{C} \cap \mathcal{W}, \mathcal{F}')\). Since weak factorisation systems satisfy left and right lifting properties with respect to each other, we have that \(\mathcal{F}' = \mathcal{F}\) as required. 
\end{proof}

\begin{proposition}\label{prop:model_pro-kom-general}
Let \(\mathfrak{F}\) be a countably complete exact category with enough injectives. Then there is a closed model category structure on \(\mathsf{Kom}(\mathfrak{F})\) where the weak equivalences are the quasi-isomorphisms, the cofibrations the degreewise inflations, and the fibrations the cokernels of cofibrations with fibrant kernels. 
\end{proposition}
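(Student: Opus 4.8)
The plan is to establish the injective model structure on $\mathsf{Kom}(\mathfrak{F})$ by verifying the model category axioms, with the hard work having already been isolated in Lemma \ref{lem:fibration-description}. The strategy rests on the cotorsion-pair machinery of Kelly \cite{kelly2016homotopy} and Gillespie \cite{gillespie2011model}: an abelian (or exact) model structure is equivalent to a pair of compatible complete cotorsion pairs, and Lemma \ref{lem:fibration-description} already produced the key one. First I would recall that two classes of maps form a model structure precisely when the weak equivalences satisfy two-out-of-three and are closed under retracts, and when $(\mathcal{C} \cap \mathcal{W}, \mathcal{F})$ and $(\mathcal{C}, \mathcal{F} \cap \mathcal{W})$ are both weak factorisation systems, where $\mathcal{C}$, $\mathcal{F}$, $\mathcal{W}$ denote the proposed cofibrations, fibrations, and weak equivalences of Definition \ref{def:injective-model-structure}.

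The main input is that $\mathfrak{F}$ has enough injectives and is countably complete, so that the cotorsion pair $(\mathrm{Ob}(\mathfrak{F}), \mathsf{Inj}(\mathfrak{F}))$ on $\mathfrak{F}$ is complete. By \cite[Corollary 4.2.25]{kelly2016homotopy} this induces on $\mathsf{Ch}(\mathfrak{F})$ the cotorsion pair $(\widetilde{\mathrm{Ob}(\mathfrak{F})}, \widetilde{\mathrm{dg}(\mathsf{Inj}(\mathfrak{F}))})$, which is complete by the dualised argument already cited in the proof of Lemma \ref{lem:fibration-description}. Invoking \cite[Theorem 4.1.7]{kelly2016homotopy}, this complete cotorsion pair yields the compatible weak factorisation system $(\mathcal{C} \cap \mathcal{W}, \mathcal{F})$; this is exactly the content of Lemma \ref{lem:fibration-description}, identifying $\mathcal{F}$ with the degreewise deflations having fibrant kernels. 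Dually, the second cotorsion pair needed — the one pairing all cofibrations against the trivial fibrations — comes from the same theorem applied to the pair $(\widetilde{\mathrm{dg}(\mathrm{Ob}(\mathfrak{F}))}, \widetilde{\mathsf{Inj}(\mathfrak{F})})$, furnishing the weak factorisation system $(\mathcal{C}, \mathcal{F} \cap \mathcal{W})$, where the trivial fibrations are the deflations with fibrant and \emph{exact} kernels.

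I would then appeal to the abelian/exact model structure theorem of Gillespie \cite{gillespie2011model} in the exact-category form developed by Kelly: once we have two complete cotorsion pairs $(\mathcal{A}, \widetilde{\mathcal{V}})$ and $(\widetilde{\mathcal{A}}, \mathcal{V})$ whose cofibrant-trivial and fibrant-trivial classes are compatible — concretely, the classes of exact complexes (the thick class $\mathcal{W}$) agree across the two pairs — the two weak factorisation systems assemble into a model structure in which $\mathcal{W}$ is precisely the class of quasi-isomorphisms. Here the compatibility is automatic because both cotorsion pairs are induced from cotorsion pairs on $\mathfrak{F}$ by the same construction, so the common thick class is $\widetilde{\mathrm{Ob}(\mathfrak{F})}$, the exact complexes, matching Definition \ref{def:exact_complex}. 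The two-out-of-three and retract-closure for weak equivalences follow from the fact that $\mathcal{W}$ arises as the class of weak equivalences of a model structure, equivalently from the thickness of the class of exact complexes.

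The step I expect to be the main obstacle is not any single axiom but the bookkeeping of transporting Gillespie's theorem, which is customarily stated for $\mathbb{Z}$-graded complexes over abelian or Grothendieck-type exact categories, to the $\mathbb{Z}/2\mathbb{Z}$-periodic setting of $\mathsf{Kom}(\mathfrak{F})$. The excerpt already flags that ``the results we need from \cite{kelly2016homotopy} that are stated for arbitrary unbounded chain complexes work verbatim for $\mathbb{Z}/2\mathbb{Z}$-periodic (unbounded) chain complexes,'' so in the write-up I would lean on that remark and simply note that the $2$-periodic folding is a symmetric-monoidal, colimit-preserving operation that transports cotorsion pairs, completeness, and the identification of exact complexes with quasi-isomorphism-acyclic ones without change. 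With that caveat dispatched, the proof reduces to citing Lemma \ref{lem:fibration-description} for one weak factorisation system, its dual for the other, and Gillespie's assembly theorem to conclude.
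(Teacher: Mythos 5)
Your proposal is correct and takes essentially the same route as the paper: one complete cotorsion pair \((\widetilde{\mathrm{Ob}(\mathfrak{F})}, \widetilde{\mathrm{dg}(\mathsf{Inj}(\mathfrak{F}))})\) extracted from Lemma \ref{lem:fibration-description}, the injective cotorsion pair \((\mathsf{Ch}(\mathfrak{F}), \widetilde{\mathsf{Inj}(\mathfrak{F})})\) for the second weak factorisation system, assembly via the Hovey correspondence in Gillespie's exact-category form \cite[Theorem 3.3]{gillespie2011model}, and passage to \(\Z/2\Z\)-graded complexes via \cite[5.2.4]{kelly2016homotopy}. The only steps you label ``automatic'' are where the paper does its (routine but nontrivial) work, with precise citations: the identifications \(\widetilde{\mathrm{dg}(\mathrm{Ob}(\mathfrak{F}))} = \mathsf{Ch}(\mathfrak{F})\) and \(\widetilde{\mathrm{dg}(\mathsf{Inj}(\mathfrak{F}))} \cap \overline{\mathcal{W}} = \widetilde{\mathsf{Inj}(\mathfrak{F})}\), and the completeness of the second cotorsion pair, which rests on \(\mathsf{Kom}(\mathfrak{F})\) having enough injectives (adapting \cite[Corollary 2.6.112]{kelly2016homotopy}).
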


\begin{proof}
In the proof of Lemma \ref{lem:fibration-description}, we have already seen that the existence of enough injectives on \(\mathfrak{F}\) implies that the pair \((\widetilde{\mathrm{Ob}(\mathfrak{F})}, \widetilde{\mathrm{dg}(\mathsf{Inj}(\mathfrak{F}))})\) is a complete cotorsion pair on \(\mathsf{Ch}(\mathfrak{F})\). Furthermore, the pair \((\widetilde{\mathrm{dg}(\mathrm{Ob}(\mathfrak{F}))}, \widetilde{\mathsf{Inj}(\mathfrak{F}}))\) coincides with the injective cotorsion pair on \(\mathsf{Ch}(\mathfrak{F})\). This is because \(\widetilde{\mathsf{Inj}(\mathfrak{F}})\) coincides with split exact chain complexes of injective  objects relative to the exact category structure on \(\mathfrak{F}\), which in turn equals the collection of injective objects on \(\mathsf{Ch}(\mathfrak{F})\), by dualising \cite[Proposition 2.6.111]{kelly2016homotopy}. The collection \(\widetilde{\mathrm{dg}(\mathrm{Ob}(\mathfrak{F}))}\) equals \(\mathsf{Ch}(\mathfrak{F})\), using \cite[Proposition 4.2.53]{kelly2016homotopy}. And, since the presence of enough injectives on \(\mathfrak{F}\) implies the same for \(\mathsf{Kom}(\mathfrak{F})\) (by adapting the proof of \cite[Corollary 2.6.112]{kelly2016homotopy}), the cotorsion pair \((\mathsf{Ch}(\mathfrak{F}), \widetilde{\mathsf{Inj}(\mathfrak{F})})\) is complete. Finally, for the class \(\overline{\mathcal{W}}\) of exact chain complexes,  we have \(\widetilde{\mathrm{dg}(\mathrm{Ob}(\mathfrak{F}))} \cap \overline{\mathcal{W}} = \mathsf{Ch}(\mathfrak{F}) \cap \overline{\mathcal{W}} = \overline{\mathcal{W}} = \widetilde{\mathrm{Ob}(\mathfrak{F})}\) and \(\widetilde{\mathrm{dg}(\mathsf{Inj}(\mathfrak{F}))} \cap \overline{\mathcal{W}} = \widetilde{\mathsf{Inj}(\mathfrak{F})}\), where the last identity follows from \cite[4.2.34]{kelly2016homotopy}. The Hovey Correspondence (see \cite[Theorem 3.3]{gillespie2011model} for the exact categorical version) induces the model structure as in the statement of the proposition. Finally, by \cite[5.2.4]{kelly2016homotopy}, \(\mathsf{Kom}(\mathfrak{F})\) inherits the same  model structure, by interpreting weak equivalences, cofibrations and fibrations degree-wise. 
\end{proof}

We now specialise Proposition \ref{prop:model_pro-kom-general} to our setting. Concretely, given a quasi-abelian category \(\mathcal{E}\) and a full subcategory \(J\), we want the locally split exact category structure on \(\overleftarrow{\mathcal{E}}\) relative to \(J\) to induce a model category structure on \(\mathsf{Kom}(\overleftarrow{\mathcal{E}})\) by interpreting chain maps and extensions degreewise. Note that there is a related category, namely, the category \(\overleftarrow{\mathsf{Kom}(\mathcal{E})}\) of projective systems of complexes with entries in \(\mathcal{E}\). Proposition \ref{lem:pro-com-pro} below shows that these two categories are equivalent. First we require the following technical lemma.

\begin{lemma}\label{lem:pro-technical}
Let \(R\) be a ring that is finitely generated as a \(\Z\)-module. Let \((X,\sigma_X)\) and \((Y,\sigma_Y)\) be projective systems of \(R\)-modules, and \(f \colon X \to Y\) a morphism of projective systems of \(\Z\)-modules that is \(R\)-linear in the sense that 

$$\bfig
  \square/>`>`>`>/[R \otimes_\Z X ` X ` R \otimes_\Z Y `Y;m_X` 1 \otimes f ` f `m_Y]
\efig$$

\noindent commutes, where \(m_X\) and \(m_Y\) are the multiplication maps of \(X\) and \(Y\), respectively. Then \(f\) can be represented as a morphism of projective system of \(R\)-modules. 
\end{lemma}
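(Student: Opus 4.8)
The plan is to compare the two Hom-groups
\[
\Hom_{\overleftarrow{R}}(X,Y)=\varprojlim_n\varinjlim_m\Hom_R(X_m,Y_n),\qquad \Hom_{\overleftarrow{\Z}}(X,Y)=\varprojlim_n\varinjlim_m\Hom_\Z(X_m,Y_n),
\]
and to show that the forgetful map between them, induced levelwise by the inclusions $\Hom_R(X_m,Y_n)\hookrightarrow\Hom_\Z(X_m,Y_n)$, has $f$ in its image. First I would observe that this map is injective: if two $R$-linear maps $X_m\to Y_n$ agree as $\Z$-linear maps after composing with a transition $X_{m'}\to X_m$, then, since the transition maps of $X$ are $R$-linear, the two composites coincide already as $R$-linear maps, so they represent the same class in $\varinjlim_m\Hom_R(X_m,Y_n)$. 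Passing to the countable limit over $n$ preserves injectivity. Consequently it suffices to lift $f$ componentwise: if each component $f_n\in\varinjlim_m\Hom_\Z(X_m,Y_n)$ admits an $R$-linear representative, the resulting classes in $\varinjlim_m\Hom_R(X_m,Y_n)$ are unique, and their compatibility in the limit over $n$ is forced by the compatibility of the $f_n$ together with injectivity.

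The heart of the matter is therefore the componentwise statement, and this is where finite generation enters. Fix $n$ and a representative $f_n\colon X_{m_0}\to Y_n$ of $f$. Choosing generators $r_1,\dots,r_k$ of $R$ as a $\Z$-module, multiplication by $r_i$ defines strict (levelwise) pro-endomorphisms $\mu_i^X$ and $\mu_i^Y$ of $X$ and $Y$; precomposing the assumed commuting square with the $\Z$-linear pro-morphism $x\mapsto r_i\otimes x$ yields the relations $f\circ\mu_i^X=\mu_i^Y\circ f$ in $\overleftarrow{\Z}$. At level $n$ each such relation becomes an equality $[f_n\circ\mu_i^X]=[\mu_i^Y\circ f_n]$ in the filtered colimit $\varinjlim_m\Hom_\Z(X_m,Y_n)$, and hence holds after composing with some transition $\sigma_i\colon X_{m_i}\to X_{m_0}$.

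The main obstacle---and the only place the hypotheses are genuinely used---is to turn these finitely many, a priori unrelated, conditions into honest $R$-linearity of a single representative. Since the index set $\N$ is directed and there are only finitely many generators, I would pass to a common refinement $\sigma\colon X_{m'}\to X_{m_0}$ factoring through all the $\sigma_i$, and set $g_n\defeq f_n\circ\sigma$. By construction $g_n(r_i x)=r_i\,g_n(x)$ for every $i$ and every $x\in X_{m'}$; because $g_n$ is $\Z$-linear and the $r_i$ generate $R$ as a $\Z$-module, writing an arbitrary $r\in R$ as $\sum_i n_i r_i$ with $n_i\in\Z$ gives $g_n(rx)=r\,g_n(x)$, so $g_n$ is $R$-linear and still represents $f_n$. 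This supplies the required componentwise lift and, by the first paragraph, assembles into a morphism of projective systems of $R$-modules representing $f$. The finite generation of $R$ over $\Z$ is used twice here: to form the common refinement over finitely many generators, and to upgrade generator-linearity to full $R$-linearity.
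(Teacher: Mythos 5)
Your proof is correct and follows essentially the same route as the paper's: represent \(f\) by \(\Z\)-linear level maps, use finite generation of \(R\) over \(\Z\) to pass to a single transition index at which \(f_n\) commutes with all generators simultaneously, and then upgrade generator-linearity to full \(R\)-linearity via \(\Z\)-linearity. Your additional scaffolding (injectivity of the forgetful map on pro-Hom groups to force compatibility over \(n\)) is a point the paper leaves implicit, but it is the same argument in substance.
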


\begin{proof}
We can represent \(f\) by \(\Z\)-linear maps \((f_n \colon X_{m(n)} \to Y_n)_{n \in \N}\). The condition of \(R\)-linearity says that for each generator \(r \in R\), there are indices \(k_{r}(n) \geq m(n)\) to have \(f_n(\sigma^X(r \cdot x)) = r \cdot \sigma^Y(f_{k_{r}}(x))\) for \(x \in X_{k_{r}(n)}\). Now since \(R\) is finitely generated, we can arrange that this equality holds simultaneously on all the generators of \(R\), by taking the maximum \(k(n)\) of all such indices \(k_r(n)\). So \(\{ f_n\circ\sigma\colon X_{k(n)} \to Y_n\}\) is a morphism of projective systems of \(R\)-modules.  
\end{proof}

\begin{proposition}\label{lem:pro-com-pro}
Let \(\mathcal{E}\) be an additive category with cokernels. Then we have an equivalence of categories \(\overleftarrow{\mathsf{Kom}(\mathcal{E})} \cong \mathsf{Kom}(\overleftarrow{\mathcal{E}})\).
\end{proposition}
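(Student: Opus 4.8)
The plan is to exhibit the obvious \emph{regrouping} functor $\Phi\colon \overleftarrow{\mathsf{Kom}(\mathcal{E})}\to \mathsf{Kom}(\overleftarrow{\mathcal{E}})$ and to prove it is an equivalence by reducing each of the three checks (well-definedness on morphisms, full faithfulness, essential surjectivity) to the reindexing argument of Lemma \ref{lem:pro-technical}. The conceptual point organizing everything is that a $\Z/2\Z$-graded complex is the same datum as a module over the graded ring $\Lambda\defeq\Z[t]/(t^2)$, with $t$ placed in odd degree and acting as the differential, the relation $t^2=0$ recovering $d^2=0$. Since $\Lambda$ is free of rank two over $\Z$, hence finitely generated, a $\Lambda$-action on an object of any additive category needs no tensor products and is simply an odd endomorphism squaring to zero. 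Thus $\mathsf{Kom}(\mathcal{E})$ is the category of $\Lambda$-modules in the category $\mathcal{E}^{\Z/2\Z}$ of $\Z/2\Z$-graded objects of $\mathcal{E}$.

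First I would record that forming pro-systems commutes with the finite grading: a countable projective system of $\Z/2\Z$-graded objects is the same as a pair of projective systems, so $\overleftarrow{\mathcal{E}^{\Z/2\Z}}\cong(\overleftarrow{\mathcal{E}})^{\Z/2\Z}$. Under this identification the two categories in the statement become, respectively, projective systems of $\Lambda$-modules in $\mathcal{E}^{\Z/2\Z}$ and $\Lambda$-modules in $\overleftarrow{\mathcal{E}^{\Z/2\Z}}$. The functor $\Phi$ sends a projective system of complexes $(C^{(n)})_{n}$ to the pair of pro-objects $((C^{(n)}_0)_n,(C^{(n)}_1)_n)$ equipped with the pro-morphism assembled from the level-wise differentials; this is well defined precisely because the transition maps are chain maps, so the differentials are compatible with them, and functoriality on morphisms is immediate.

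The substance is then the comparison between a single pro-morphism of $\Lambda$-modules and a level-wise one, which is exactly Lemma \ref{lem:pro-technical} applied to $R=\Lambda$. For full faithfulness: a morphism $\Phi C\to\Phi D$ in $\mathsf{Kom}(\overleftarrow{\mathcal{E}})$ is a pro-morphism of the underlying graded pro-objects commuting with $t$, i.e.\ a $\Lambda$-linear pro-morphism; the argument of Lemma \ref{lem:pro-technical}---representing it by level-wise $\mathcal{E}$-maps $f_n\colon C_{m(n)}\to D_n$ and, using that $\Lambda$ has finitely many generators, arranging the single linearity relation for $t$ to hold simultaneously at a common index---shows it is represented by an honest morphism of projective systems of complexes, giving fullness, while faithfulness is immediate from the $\varprojlim\varinjlim$ description of $\Hom_{\overleftarrow{\mathcal{E}}}$. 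For essential surjectivity I would run the same reindexing on the structure morphism: an object of $\mathsf{Kom}(\overleftarrow{\mathcal{E}})$ is a graded pro-object equipped with a pro-endomorphism $d$ with $d^2=0$; passing to a cofinal subsystem on which $d$ is level-wise and the relation $t^2=0$ holds at finite stages produces a genuine projective system of supercomplexes carried isomorphically onto it by $\Phi$.

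The main obstacle is precisely this reindexing: a priori the differential of an object of $\mathsf{Kom}(\overleftarrow{\mathcal{E}})$, and a chain map between such objects, are pro-morphisms living in a double limit $\varprojlim_n\varinjlim_m$, so there is no reason for them to be given level-wise or for the relation $d^2=0$ (resp.\ the chain-map identity) to hold before passing to the colimit. Finite generation of $\Lambda$ over $\Z$ is what lets one clear all the relations at a single index, and this finiteness is the crux. The only care needed beyond Lemma \ref{lem:pro-technical} as stated is that its proof uses nothing about $\mathsf{Mod}_\Z$ beyond additivity and the fact that an equality of morphisms holding in $\varinjlim_m$ already holds at a finite stage, so it applies verbatim to $\Lambda$-module objects in the additive category $\mathcal{E}^{\Z/2\Z}$ (alternatively one reduces to genuine modules through the additive Yoneda embedding of $\mathcal{E}$).
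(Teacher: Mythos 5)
Your reduction of supercomplexes to module objects over a finitely generated ring, rectified by the reindexing Lemma \ref{lem:pro-technical}, is in spirit exactly the paper's strategy (the paper uses the rank-four ring \(R=\setgiven{g,d}{g^2=1,\ gd+dg=0,\ d^2=0}\) acting on ungraded objects, where your graded \(\Lambda=\Z[t]/(t^2)\) acting on \(\mathcal{E}^{\Z/2\Z}\) plays the same role), and your full-faithfulness argument is sound: there the source and target \(\Phi C\), \(\Phi D\) are already genuine projective systems of \(\Lambda\)-modules, which is what Lemma \ref{lem:pro-technical} requires.

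The gap is in essential surjectivity. For an object \(X\) of \(\mathsf{Kom}(\overleftarrow{\mathcal{E}})\) the differential is only a pro-endomorphism: it is represented by maps \(d_n \colon X_{m(n)} \to X_n\) that cross levels, and no passage to a cofinal subsystem turns these into endomorphisms \(X_n \to X_n\) with \(d_n^2=0\) — reindexing changes which levels you see, not the fact that the representing maps shift levels. Nor does Lemma \ref{lem:pro-technical} apply at this point: its hypotheses require source and target to \emph{already} be projective systems of \(R\)-modules, i.e.\ to carry level-wise module structures, which is precisely what has to be produced. It is symptomatic that your argument never uses the hypothesis that \(\mathcal{E}\) has cokernels. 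The paper bridges exactly this step with a bar-resolution argument: the free module objects \(R \otimes_\Z X_n\) (finite direct sums of copies of \(X_n\)) do carry level-wise \(R\)-actions; the action map induces an \(R\)-linear pro-morphism \(R \otimes_\Z R \otimes_\Z X \to R \otimes_\Z X\) between genuine systems of \(R\)-modules, which Lemma \ref{lem:pro-technical} rectifies to a level-wise morphism; and the level-wise cokernels — this is where the cokernel hypothesis enters — recover \(X_n\) with an honest \(R\)-action, naturally in \(X\) by naturality of the bar resolution. Your proof needs this rectification (or an equivalent one) to be complete; as written, the sentence ``passing to a cofinal subsystem on which \(d\) is level-wise and the relation \(t^2=0\) holds at finite stages'' asserts the proposition's hardest point rather than proving it.
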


\begin{proof}

Let \(R\) be the ring with the presentation \(\setgiven{g,d}{g^2 = 1, gd + dg = 0, d^2 = 0}\). Then a \(\Z/2\Z\)-graded chain complex over \(\mathcal{E}\) is equivalent to an object \(X \in \mathcal{E}\), together with a ring homomorphism \(R \to \mathrm{End}_{\mathcal{E}}(X)\). Now since \(\mathcal{E}\) is additive and has cokernels, so does \(\overleftarrow{\mathcal{E}}\). Therefore any chain complex in \(\mathsf{Kom}(\overleftarrow{\mathcal{E}})\) is a projective system \(X = (X_n)_{n\in \N}\) in \(\overleftarrow{\mathcal{E}}\), together with a ring homomorphism \(f \colon R \to \mathrm{End}_{\overleftarrow{\mathcal{E}}}(X)\). Viewing \(R\) as an \(R\)-module, we obtain a projective system \(R \otimes_\Z X\) of \(R\)-modules. Since \(R\) is finitely generated and free as an abelian group and, since \(\overleftarrow{\mathcal{E}}\) is an additive category, \(R \otimes_\Z X\) is well-defined. It is a direct sum of finitely many copies of \(X\). The map \(f\) induces a morphism \(R \otimes_\Z X \to X\) of projective systems in \(\overleftarrow{\mathcal{E}}\). Tensoring on the left with \(R\), we obtain a morphism \(R \otimes_\Z R \otimes_\Z X \to R \otimes_\Z X\) of projective systems in \(\overleftarrow{\mathcal{E}}\), which is \(R\)-linear. Since \(R\) is finitely generated, Lemma \ref{lem:pro-technical} implies that we can represent this map as a projective system of \(R\)-module maps \(R \otimes_\Z R \otimes_\Z X \to R \otimes_\Z X\). These can be further represented as a diagram of \(R\)-modules \((R \otimes_\Z R \otimes_\Z X_n \to R \otimes_\Z X_n)_{n\in M}\), after suitably reindexing by some directed set \(M\), with \((X_n)_{n\in M} \cong X\) in \(\overleftarrow{\mathcal{E}}\). For each \(n\), the cokernel of \(R \otimes_\Z R \otimes_\Z X_n \to R \otimes_\Z X_n\) is \(X_n\), so that each \(X_n\) is an \(R\)-module. Therefore, \(X\) is a projective system of \(R\)-modules.  By naturality of the bar resolution, this assignment is indeed a functor \(\mathsf{Kom}(\overleftarrow{\mathcal{E}}) \to \overleftarrow{\mathsf{Kom}(\mathcal{E})}\), which is inverse to the functor \(\overleftarrow{\mathsf{Kom}(\mathcal{E})} \to \mathsf{Kom}(\overleftarrow{\mathcal{E}})\) that forgets the \(R\)-action on a diagram in \(\mathcal{E}\). 
\end{proof}

Now given \(C\), \(D \in \mathsf{Kom}(\overleftarrow{\mathcal{E}}) \cong \overleftarrow{\mathsf{Kom}(\mathcal{E})}\), there are two internal mapping spaces, namely, \(\mathrm{HOM}_{\overleftarrow{\mathcal{E}}}(C,D)\) and \(\mathrm{Hom}_{\overleftarrow{\mathsf{Kom}(\mathcal{E})}}(C,D) = \varinjlim_n \varprojlim_m \mathrm{HOM}_{\mathcal{E}}(C_n, D_m).\) Since the forgetful functor in Proposition \ref{lem:pro-com-pro} is fully faithful, we have a bijection \(\mathrm{HOM}_{\overleftarrow{\mathcal{E}}}(C,D) \cong \mathrm{Hom}_{\overleftarrow{\mathsf{Kom}(\mathcal{E})}}(C,D)\). In the situation where \(\cE\) is quasi-abelian and \(J\) is a full subcategory, we use this bijection and the locally split exact category structure on \(\overleftarrow{\mathcal{E}}\), to construct a model structure on \(\overleftarrow{\mathsf{Kom}(\mathcal{E})}\) by means of cofibrations and weak equivalences defined on \(\mathsf{Kom}(\overleftarrow{\mathcal{E}})\). For the rest of this article, \(\mathrm{HOM}_{\overleftarrow{\mathcal{E}}}\) will unambiguously denote the morphism set of the category \(\overleftarrow{\mathsf{Kom}(\mathcal{E})}\). In particular, we call a morphism \(f \colon X \to Y\) in \(\overleftarrow{\mathsf{Kom}(\cE)} \cong \mathsf{Kom}(\overleftarrow{\cE})\) a \textit{cofibration} if it is degree-wise an inflation for the locally split exact structure on \(\overleftarrow{\cE}\).

We now spell out the fibrations in the category \(\overleftarrow{\mathsf{Kom}(\mathcal{E})} \cong \mathsf{Kom}(\overleftarrow{\cE})\) more explicitly when \(\overleftarrow{\cE}\) has the locally split exact structure relative to a full subcategory \(J\) of a quasi-abelian category \(\cE\). A pro-complex \(X = (X_n)_{n \in \N} \in \overleftarrow{\mathsf{Kom}(\mathcal{E})} \cong \mathsf{Kom}(\overleftarrow{\mathcal{E}})\) is \textit{fibrant} if at each degree, \(X_n\) is relatively injective for the locally split exact category structure on \(\overleftarrow{\mathcal{E}}\) relative to \(J\).  Finally, a morphism \(f \colon X \to Y\) in \(\overleftarrow{\mathsf{Kom}(\cE)}\) is a \textit{fibration} if it is degree-wise a deflation and \(\ker(f)\) is fibrant.

We now describe the injective model structure on the category \(\overleftarrow{\mathsf{Kom}(\mathsf{Ind}(\mathcal{C}))}\) (which by Proposition \ref{lem:pro-com-pro} is the same as the category \(\mathsf{Kom}(\overleftarrow{\mathsf{Ind}(\mathcal{C})})\).

\begin{theorem}\label{thm:existence-injective}
Let \(\mathcal{C}\) be a quasi-abelian category, and consider \(\overleftarrow{\mathsf{Ind}(\cC)}\) as an exact category with respect to the locally split exact structure relative to \(\cC\). Then the injective model structure exists on \(\overleftarrow{\mathsf{Kom}(\mathsf{Ind}(\mathcal{C}))} \cong \mathsf{Kom}(\overleftarrow{\mathsf{Ind}(\cC)})\). Explicitly,
\begin{itemize}
\item its weak equivalences are the quasi-isomorphisms for the exact category structure on \(\overleftarrow{\mathsf{Ind}(\mathcal{C})}\);
\item its cofibrations are degree-wise inflations for the exact structure above;
\item its fibrations are degree-wise deflations for the exact structure above, with fibrant kernels. 
\end{itemize}
\end{theorem}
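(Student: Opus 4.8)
The plan is to realize this theorem as a direct instance of Proposition \ref{prop:model_pro-kom-general}, applied to the exact category \(\mathfrak{F} = \overleftarrow{\mathsf{Ind}(\cC)}\) equipped with the locally split exact structure relative to \(\cC\), and then to transport the resulting model structure across the equivalence \(\overleftarrow{\mathsf{Kom}(\mathsf{Ind}(\cC))} \cong \mathsf{Kom}(\overleftarrow{\mathsf{Ind}(\cC)})\) of Proposition \ref{lem:pro-com-pro}. Thus the entire argument reduces to checking the two hypotheses of Proposition \ref{prop:model_pro-kom-general}: that \(\mathfrak{F}\) is a countably complete exact category with enough injectives.

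First I would record that the locally split extensions relative to \(\cC\) form an exact structure on \(\overleftarrow{\mathsf{Ind}(\cC)}\). Since \(\cC\) is quasi-abelian, \(\mathsf{Ind}(\cC)\) is again quasi-abelian, so Lemma \ref{lem:internal-pro-exact} applies with \(\cE = \mathsf{Ind}(\cC)\) and \(J = \cC\), furnishing the exact structure; concretely, by Proposition \ref{prop:locally-split-specialised} these conflations are exactly those isomorphic to a diagram of ind-locally split extensions. Next I would verify countable completeness: because \(\mathsf{Ind}(\cC)\) is additive with kernels and cokernels it has finite limits, and the pro-category \(\overleftarrow{\mathsf{Ind}(\cC)}\) then admits countable products via the fake-product construction appearing in the proof of Theorem \ref{thm:enough-injectives-pro}, together with kernels, hence is countably complete. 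Finally, enough injectives is precisely the content of Theorem \ref{thm:enough-injectives-pro}.

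With the hypotheses in hand, Proposition \ref{prop:model_pro-kom-general} endows \(\mathsf{Kom}(\overleftarrow{\mathsf{Ind}(\cC)})\) with the injective model structure of Definition \ref{def:injective-model-structure}, whose weak equivalences are the quasi-isomorphisms, whose cofibrations are the degreewise inflations, and whose fibrations are the degreewise deflations with fibrant kernels. I would then transport this along the equivalence of Proposition \ref{lem:pro-com-pro}. Because that equivalence is essentially the identity on the underlying diagrams in \(\mathsf{Ind}(\cC)\), only forgetting or reconstructing the action of the ring \(R\), it carries degreewise inflations to degreewise inflations, deflations with fibrant kernels to the same, and quasi-isomorphisms to quasi-isomorphisms. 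Hence the three distinguished classes descend verbatim to \(\overleftarrow{\mathsf{Kom}(\mathsf{Ind}(\cC))}\), giving the explicit description in the statement.

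I expect no serious obstacle, since the substantive work is already in place: the existence of enough injectives is Theorem \ref{thm:enough-injectives-pro}, and the passage from an exact category to a model structure on its complexes is Proposition \ref{prop:model_pro-kom-general}. The only points demanding care are confirming that \(\mathsf{Ind}(\cC)\) is genuinely quasi-abelian so that Lemma \ref{lem:internal-pro-exact} applies with \(\cE=\mathsf{Ind}(\cC)\), and checking that the equivalence of Proposition \ref{lem:pro-com-pro} respects the degreewise exact structures, so that the model structure really does descend unchanged.
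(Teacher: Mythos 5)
Your proposal is correct and takes essentially the same route as the paper: the paper's proof likewise verifies enough injectives via Theorem \ref{thm:enough-injectives-pro}, deduces countable completeness of \(\overleftarrow{\mathsf{Ind}(\cC)}\) from the existence of finite limits in \(\mathsf{Ind}(\cC)\), and then invokes Proposition \ref{prop:model_pro-kom-general}, with the identification \(\overleftarrow{\mathsf{Kom}(\mathsf{Ind}(\cC))} \cong \mathsf{Kom}(\overleftarrow{\mathsf{Ind}(\cC)})\) of Proposition \ref{lem:pro-com-pro} used exactly as you describe. Your additional care in checking that this equivalence preserves the three distinguished classes is sound, and is treated implicitly in the paper's discussion preceding the theorem.
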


\begin{proof}
By Theorem \ref{thm:enough-injectives-pro}, under the hypotheses on \(\cC\), the category \(\overleftarrow{\mathsf{Ind}(\cC)}\) with the locally split exact structure relative to \(\cC\) has enough injectives. The existence of finite limits in \(\mathsf{Ind}(\cC)\) implies that \(\overleftarrow{\mathsf{Ind}(\cC)}\) has countable limits.  Proposition \ref{prop:model_pro-kom-general} now yields the desired result.  
\end{proof}

The Hovey correspondence mentioned in the proof of Proposition \ref{prop:model_pro-kom-general} also provides that the collections \(\widetilde{\mathrm{dg}(\mathrm{Ob}(\mathfrak{F}))} = \mathsf{Kom}(\mathfrak{F})\) (resp. \(\widetilde{\mathrm{Ob}(\mathfrak{F})}\)) and \(\widetilde{\mathrm{dg}(\mathsf{Inj}(\mathfrak{F}))}\) (resp. \(\widetilde{\mathsf{Inj}(\mathfrak{F})}\)) are the \textit{cofibrant} (resp. \textit{trivially cofibrant}) and \textit{fibrant} (resp. \textit{trivially fibrant}) objects of \(\mathsf{Kom}(\mathfrak{F})\), respectively. The \textit{trivial objects} are, of course, the exact chain complexes. The nomenclature ``injective'' model structure is due to the fact that the trivially fibrant objects coincide with the injective objects of \(\mathsf{Kom}(\mathfrak{F})\). 

We now describe the quasi-isomorphisms and exact chain complexes in this category more explicitly. 

\begin{definition}
  \label{def:locally_contractible}
  Let \(\mathcal{C}\) be an additive category with kernels and cokernels and let \(C=(C_k,\alpha_n^k)_{k,n\in\N}\) be a chain complex over
  \(\overleftarrow{\mathsf{Ind}(\mathcal{C})}\).  We may arrange for
  each~\(C_k\) to be a chain complex and write
  \(C_k \cong (C_{k,i})_{i\in I_k}\) as an inductive system of chain
  complexes.  For each \(n,k\in\N\), \(i\in I_k\), with \(k\ge n\),
  let \(\alpha_{n,i}^k \colon C_{k,i} \to C_n\) be the component of
  the structure map \(\alpha_n^k\colon C_k \to C_n\) of the
  projective system at~\(i\); this is a morphism in \(\mathcal{C}\)
  to~\(C_{n,j}\) for some \(j\in I_n\).  The chain complex~\(C\) is
  called \emph{locally contractible} if, for every~\(n\), there is a
  \(k \geq n\) such that for any \(i \in I_k\), the
  map~\(\alpha_{n,i}^k\) is null-homotopic.  A chain map
  \(f \colon C \to D\) is called a \emph{local chain homotopy
    equivalence} if its mapping cone is locally contractible.
\end{definition}

\begin{proposition}
  \label{prop:locally_contractible_exact}
A chain complex in \(\overleftarrow{\mathsf{Ind}(\cC)}\) is  locally contractible if and only if it is exact for the locally split exact structure on \(\overleftarrow{\mathsf{Ind}(\cC)}\).
\end{proposition}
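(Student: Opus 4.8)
The plan is to derive both properties from a single intermediate condition -- that $C$ becomes a zero object in the pro-ind homotopy category $\overleftarrow{\mathsf{Ind}(\mathsf{HoKom}(\cC))}$ -- and to pass between exactness and this condition using the three ingredients already available: the definition of an exact complex (Definition \ref{def:exact_complex}), the description of conflations in $\overleftarrow{\mathsf{Ind}(\cC)}$ supplied by Proposition \ref{prop:locally-split-specialised} together with the indisation remark and the identification (in the proof of Corollary \ref{cor:ind-C-enough-injectives}) of the ind-locally split structure with the indisation of the split exact structure on $\cC$, and the elementary homological fact that, for the split exact structure, a chain complex $D$ over $\cC$ is exact (i.e.\ $\ker d \into D \onto \ker d$ splits degreewise) if and only if it is contractible. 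I would first record this elementary fact, together with its consequence that every morphism out of, or into, a contractible complex is null-homotopic.

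For the implication \emph{exact} $\Rightarrow$ \emph{locally contractible}, I would unwind the definition. By Definition \ref{def:exact_complex}, exactness of $C$ says that $\ker d \into C \onto \ker d$ is a conflation for the locally split structure relative to $\cC$; by Proposition \ref{prop:locally-split-specialised} this extension is isomorphic, as an extension in $\overleftarrow{\mathsf{Ind}(\cC)}$, to a diagram $(K_n \into E_n \onto Q_n)_{n\in\N}$ of ind-locally split extensions, and by the indisation remark each $E_n$-extension is representable by a filtered system of \emph{split} conflations in $\cC$. Since the deflation here is induced by $d$ and both kernel and cokernel are $\ker d$, one has $d = \iota \circ \bar d$, so the isomorphism of extensions is in fact an isomorphism of complexes $C \cong (E_n)_n$ in $\overleftarrow{\mathsf{Ind}(\mathsf{Kom}(\cC))}$; each $E_n$, being a filtered system of middle terms of split conflations, is a filtered system of contractible complexes. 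Thus $C \cong 0$ in $\overleftarrow{\mathsf{Ind}(\mathsf{HoKom}(\cC))}$. Unwinding the $\varprojlim_n \varinjlim_i$ form of the Hom-sets of that homotopy category, the vanishing of $\mathrm{id}_C$ says exactly that for every $n$ there is $k \ge n$ with each component $\alpha_{n,i}^k$ null-homotopic, which is local contractibility in the sense of Definition \ref{def:locally_contractible}.

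For the converse, \emph{locally contractible} $\Rightarrow$ \emph{exact}, I would run these reductions backwards, using the homotopies to produce a representative of the kind demanded by Proposition \ref{prop:locally-split-specialised}. Local contractibility provides a cofinal chain $n_0 < n_1 < \cdots$ of projective indices and degree $-1$ maps $h_m$ with $\alpha_{n_m}^{n_{m+1}} = d h_m + h_m d$ componentwise in the inductive direction. The goal is to manufacture from the $h_m$ an isomorphism, in the category of extensions over $\overleftarrow{\mathsf{Ind}(\cC)}$, between $\ker d \into C \onto \ker d$ and a diagram of split (hence ind-locally split) conflations; given such a representative, Proposition \ref{prop:locally-split-specialised} and Lemma \ref{lem:internal-pro-exact} yield at once that $\ker d \into C \onto \ker d$ is a conflation, so that $C$ is exact. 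The mechanism is the local splitting of $\bar d \colon C \onto \ker d$ after passing up the tower: for $X \in \cC$ and $g \colon X \to \ker d$, the candidate lift $h_m \circ g$ satisfies $\bar d\,(h_m g) = \alpha_{n_m}^{n_{m+1}} g = g$ because $dg = 0$.

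The main obstacle lies precisely in the interaction of the two indexing directions in this last step. The candidate lifts $h_m \circ g$ are compatible along the projective tower only up to a boundary: one computes $\alpha_{n_m}^{n_{m+1}}(h_{m+1}g) - h_m g = d(h_m h_{m+1} g)$, a $\varprojlim^{1}$-type discrepancy that prevents the levelwise homotopies from assembling into a genuine morphism of pro-ind systems on the nose. Overcoming this requires either a telescope/interleaving construction absorbing the successive correction terms $d(h_m h_{m+1} \cdots)$, or -- equivalently and more cleanly -- the direct production of the split representative, so that one transports along an isomorphism of extensions rather than lifting by hand. Getting this bookkeeping right, namely tracking the reindexing provided by Proposition \ref{prop:locally-split-specialised} and the indisation remark in both the projective and the inductive variables, and verifying that the resulting splitting is natural enough to define a morphism in $\overleftarrow{\mathsf{Ind}(\cC)}$, is where the real work resides; the homological inputs (split exact $=$ contractible, and the lifting formula $h \circ g$) are elementary once the indices are controlled.
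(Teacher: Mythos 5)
Your proposal has a genuine gap, and it sits exactly where you yourself locate ``the real work'': the implication \emph{locally contractible} \(\Rightarrow\) \emph{exact} is never actually carried out. You correctly compute the obstruction --- the candidate lifts \(h_m\circ g\) agree along the projective tower only up to the boundary \(d(h_mh_{m+1}g)\) --- but then you only name two possible remedies (``telescope'' or ``direct production of the split representative'') without constructing either, so the hard direction remains a statement of intent. The paper's proof supplies precisely the missing mechanism, and it is neither a telescope nor an assembled global section: it is a pullback reindexing. For \(l\ge k\ge n\) one forms \(X_{l,n}\defeq \ker(d_l)\times_{\ker(d_n)}C_n\); the homotopy \(h_{n,i}^{l}\), restricted to \(\ker(d_l)_i\) so that \(d_n\circ h_{n,i}^l=\alpha_{n,i}^l\), feeds the universal property of the pullback to produce a section \(\sigma_{l,n}^i\) of the projection \(g_{l,n}\colon X_{l,n}\to\ker(d_l)\) over each component \(\ker(d_l)_i\), so each \(g_{l,n}\) is an ind-locally split cokernel in \(\mathsf{Ind}(\cC)\). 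The crucial point is that the \(g_{l,n}\) are \emph{canonical} pullback projections, hence assemble into a morphism of pro-ind systems with no coherence required among the homotopies for different \((l,n)\): the incoherent local homotopies are converted into the \emph{levelwise} splitness of a reindexed representative, and a morphism of pro-systems never needs more. Since the maps \(\ker(d_l)\to\ker(d_n)\) represent the identity, \(X\cong C\) as pro-systems and \((g_{l,n})\) represents \(d\colon C\to\ker(d)\), proving exactness. This one construction dissolves your \(\varprojlim^1\)-type discrepancy; without it your sketch of this direction does not close.

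Your forward direction (\emph{exact} \(\Rightarrow\) \emph{locally contractible}) also contains an unjustified step. Proposition \ref{prop:locally-split-specialised} gives an isomorphism of \emph{extensions} between \(\ker d\into C\onto\ker d\) and a levelwise diagram \((K_n\into E_n\onto Q_n)\), but the identifications of kernel and cokernel with \(\ker d\) are isomorphisms only in \(\overleftarrow{\mathsf{Ind}(\cC)}\), so the differential transported to \((E_n)\) has the form \(\iota_E\circ\theta\circ\pi_E\) with \(\theta\colon Q\to K\) a pro-ind isomorphism that is \emph{not} a levelwise map. Consequently the components \(E_{n,i}\) carry no differential, and your assertion that the isomorphism of extensions ``is in fact an isomorphism of complexes \(C\cong(E_n)_n\) in \(\overleftarrow{\mathsf{Ind}(\mathsf{Kom}(\cC))}\), each \(E_n\) a filtered system of contractible complexes'' does not parse as stated; restoring a levelwise complex structure compatible with the split conflations requires the reindexing of Proposition \ref{lem:pro-com-pro} and fresh bookkeeping, which is essentially the content of the proof the paper cites for this direction, namely that of \cite[Theorem 3.3.9]{thesis}, where the local contracting homotopies are built directly from the local sections. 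Your elementary inputs --- split exact equals contractible, the lifting formula \(d_n\circ(h\circ g)=g\), and the unwinding of \(\mathrm{id}_C=0\) in \(\overleftarrow{\mathsf{Ind}(\mathsf{HoKom}(\cC))}\) as Definition \ref{def:locally_contractible} --- are all correct, but in both directions the bridge between the pro-ind extension data and genuinely levelwise data is asserted rather than built, and that bridge is the theorem.
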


\begin{proof}
  Let~\(C\) be a locally contractible chain complex.  Write
  \(C \cong (C_k,d_k)_{k \in \Z}\) with a compatible family of
  morphisms \(d_k \colon C_k \to C_k\) in
  \(\mathsf{Ind}(\cC)\) with \(d_k^2=0\), as in the
  definition of a locally contractible chain complex.  Then
  \(\ker(d) \cong \ker(d_n)_{n\in \Z}\).  We need to prove that the
  morphism of projective systems described by the morphisms
  \(d_n \colon C_n \to \ker(d_n)\) is a cokernel in the
  category \(\overleftarrow{\mathsf{Ind}(\cC)}\).
  Let \((C_{k,i},d_{k,i})\), \(\alpha_n^k\) and \(\alpha_{n,i}^k\) be as in
  Definition~\ref{def:locally_contractible}.  Let \(n\in\Z\).
  Since~\(C\) is locally contractible, there is \(k \geq n\) such
  that for each \(i \in I_k\), there is a map
  \(h_{n,i}^k \colon C_{k,i} \to C_n\) with
  \[
    h_{n,i}^k \circ d_{k,i} + d_n \circ h_{n,i}^k = \alpha_{n,i}^k.
  \]
  We replace~\(h_{n,i}^k\) by its restriction to \(\ker d_{k,i}\), which
  satisfies \(d_n \circ h_{n,i}^k = \alpha_{n,i}^k\).  Composing
  with the structure maps~\(\alpha_k^l\), we get such maps for all
  \(l\ge k\) and \(i\in I_l\) as well.  For \(l\ge k\), we build a
  pull-back diagram
  
$$\bfig
  \square/>`>`>`>/[X_{l,n} ` \ker(d_l) ` C_n  ` \ker(d_n) ; g_{l,n}` \gamma_{l,n} ` \alpha_n^l `d_n]
\efig$$

  The universal property of pullbacks gives a unique map
  \(\sigma_{l,i}^n \colon \ker(d_l)_i \to X_{l,n}\) with
  \(g_{l,n}\circ \sigma_{l,n}^i = \mathrm{can}_i\colon \ker(d_l)_i
  \to \ker(d_l)\) and
  \(\gamma_{l,n} \circ \sigma_{l,n}^i = h_{n,i}^l\).  Then
  \(g_{l,n} \colon X_{l,n} \to \ker(d_l)\) is a cokernel in
  \(\mathsf{Ind}(\cC)\).  The maps~\((g_{l,n})\)
  combine to a morphism of pro-ind systems.  This morphism is a
  cokernel because each~\(g_{l,n}\) is a cokernel.  Since the family
  of maps \(\ker (d_l) \to \ker (d_n)\) represents the identity map
  of projective systems, \(X\) is isomorphic as a projective system
  to~\(C\), and the maps~\((g_{l,n})\) represent the map
  \(d\colon C\to \ker(d)\).  Consequently,
  \(d \colon C \to \ker(d)\) is a cokernel.
  
 To see the converse, let \((C,d)\) be an exact chain complex in \(\overleftarrow{\mathsf{Ind}(\cC)}\). Then by definition, \(\ker(d) \into C \onto \ker(d)\) is a locally split extension. Now the proof of \cite[Theorem 3.3.9]{thesis} applies to yield local contracting homotopies for the projective system structure maps of \(C\).   
\end{proof}

The following lemma describes local chain homotopy equivalences
directly without referring to the mapping cone:

\begin{proposition}
  \label{prop:explicit_local_homotopy_equivalences}
  Let \(f \colon C \to D\) be a chain map in
  \(\overleftarrow{\mathsf{Ind}(\cC)}\).  We may
  represent~\(f\) by a compatible family
  \((f_n \colon C_n \to D_n)_{n \in \N}\) of chain maps in
  \(\mathsf{Ind}(\cC)\), and each~\(f_n\) by a coherent
  family of chain maps \(f_{n,i}\colon C_{n,i} \to D_{n,i}\)
  in~\(\cC\) for \(i\in I_n\) with some filtered category
  ~\(I_n\).  Then \(f\) is a local chain homotopy equivalence if and only if for each \(n \in \N\), there is an
  \(m \geq n\), such that for each \(i \in I_m\), there are
  morphisms
  \[
    g_{m,i}^n \colon D_{m,i} \to C_n,\qquad
    h_{m,i}^D \colon D_{m,i} \to D_n[1],\qquad
    h_{m,i}^C \colon C_{m,i} \to C_n[1],
  \]
  where~\(g_{m,i}^n\) are chain maps and \(h_{m,i}^D\) and
  \(h_{m,j}^C\) are chain homotopies between \(f_n \circ g_{m,i}^n\)
  and \(g_{m,i} \circ f_{m,i}\), and the canonical maps
  \(\eta_{m,i}^n \colon D_{m,i} \to D_n\) and
  \(\gamma_{m,i}^n \colon C_{m,i} \to C_n\), respectively.
\end{proposition}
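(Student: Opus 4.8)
The plan is to reduce everything to the local contractibility of the mapping cone and then to read off, respectively reconstruct, a contracting homotopy block by block. Writing $\mathsf{cone}(f)=C[1]\oplus D$ with differential $\left(\begin{smallmatrix}-d_C&0\\f&d_D\end{smallmatrix}\right)$, the projective-system structure map $\alpha_{n,i}^m$ of the cone is the diagonal map built from the canonical maps $\gamma_{m,i}^n\colon C_{m,i}\to C_n$ and $\eta_{m,i}^n\colon D_{m,i}\to D_n$; it is a chain map precisely because $f$ commutes with structure maps, i.e. $f_n\circ\gamma_{m,i}^n=\eta_{m,i}^n\circ f_{m,i}$. By Definition \ref{def:locally_contractible}, $f$ is a local chain homotopy equivalence if and only if for every $n$ there is an $m\ge n$ such that, for each $i\in I_m$, the map $\alpha_{n,i}^m$ is null-homotopic. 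Thus everything reduces to analysing, for fixed $i$, a null-homotopy $s=\left(\begin{smallmatrix}\sigma&g\\\beta&\tau\end{smallmatrix}\right)$ of $\alpha_{n,i}^m$, with blocks $\sigma\colon C_{m,i}\to C_n[1]$, $g\colon D_{m,i}\to C_n$, $\tau\colon D_{m,i}\to D_n[1]$ and an auxiliary $\beta\colon C_{m,i}\to D_n[1]$.

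Expanding $d_{\mathsf{cone}}\circ s+s\circ d_{\mathsf{cone}}=\alpha_{n,i}^m$ into its four matrix entries gives: the $(1,2)$ entry says $g$ is a chain map; the $(1,1)$ entry says $\sigma$ is a chain homotopy $g\circ f_{m,i}\simeq\gamma_{m,i}^n$; the $(2,2)$ entry says $\tau$ is a chain homotopy $f_n\circ g\simeq\eta_{m,i}^n$; and the remaining $(2,1)$ entry reads $d_D\beta-\beta d_C=-(f_n\sigma+\tau f_{m,i})$. For the forward implication this is immediate: given a local contraction of the cone, I set $g_{m,i}^n=g$, $h_{m,i}^C=\sigma$, $h_{m,i}^D=\tau$, and the first three entries furnish exactly the asserted data.

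The real content is the converse. Setting $\sigma=h_{m,i}^C$, $g=g_{m,i}^n$, $\tau=h_{m,i}^D$, the first three entries hold by hypothesis, so I must only solve the $(2,1)$ entry for $\beta$. A direct computation (with the appropriate $\Z/2\Z$-graded signs), using the two homotopy relations together with $f_n\gamma_{m,i}^n=\eta_{m,i}^n f_{m,i}$, shows that $w:=f_n h_{m,i}^C+h_{m,i}^D f_{m,i}$ is a cycle in $\mathrm{HOM}_{\mathsf{Ind}(\cC)}(C_{m,i},D_n)$; hence $\beta$ exists exactly when $w$ is a boundary. This is the main obstacle: unlike the absolute situation, where $f$ would be a genuine homotopy equivalence and $[f_n]$ an isomorphism (so that $w$ automatically bounds), the maps $f_{m,i}$ and $f_n$ are only \emph{locally} invertible, and at the fixed level $m$ the class of $w$ need not vanish. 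I therefore do not expect to solve the $(2,1)$ equation at level $m$ itself.

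I plan to resolve this by passing to a deeper projective index, applying the hypothesis twice. Applying it at level $n$ yields $m_1\ge n$ with chain maps $a$ and homotopies $q$ satisfying $f_n a\simeq\eta^{m_1}_{n}$; applying it at level $m_1$ yields $m_2\ge m_1$ with chain maps $b$ and homotopies $r,t$ at level $m_2$. Denoting by $c\colon C_{m_1}\to C_n$ and $e\colon D_{m_1}\to D_n$ the structure maps of the projective systems (so $f_n c=e f_{m_1}$), I take the composite inverse $g:=c\circ b$ together with the induced composite homotopies $c\circ r$ and $e\circ t$. The obstruction cycle for the transition map $\alpha^{m_2}_{n}$ then computes to $e\circ w^{(2)}$, where $w^{(2)}=f_{m_1}r+t f_{m_2}$ is the level-$2$ obstruction; representing $w^{(2)}$ through a suitable $D_{m_1,j}$ and invoking the level-$1$ relation $\eta^{m_1}_{n}\simeq f_n a$ (via $q$), this equals $f_n(a\,w^{(2)})+\partial(q\,w^{(2)})$. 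Since $a\,w^{(2)}$ is a cycle, adjusting the $C$-homotopy by it turns the obstruction into an honest boundary, and $\beta:=\pm\,q\,w^{(2)}$ solves the $(2,1)$ equation. Hence $\alpha^{m_2}_{n,i}$ is null-homotopic for every $i\in I_{m_2}$, the cone is locally contractible, and $f$ is a local chain homotopy equivalence. The only delicate point is matching the filtered ind-indices of the two layers so that all composites are defined; this is the usual cofinality bookkeeping for pro-ind systems and causes no genuine difficulty.
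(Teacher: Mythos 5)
Your proposal is correct and is essentially the paper's own argument: both reduce the statement to local contractibility of \(\mathsf{cone}(f)\), identify the data \((g_{m,i}^n, h_{m,i}^C, h_{m,i}^D)\) with three blocks of a contracting matrix homotopy, and defeat the leftover off-diagonal obstruction by applying the hypothesis at two nested pro-levels, where the composite of the two strictly triangular defect terms vanishes. Your explicit solution of the \((2,1)\)-entry --- the correction of the \(C\)-homotopy by the cycle \(a\,w^{(2)}\) and the choice \(\beta = \pm\, q\, w^{(2)}\) --- is exactly the entry-wise form of the paper's composite null-homotopy \(h = \tilde h \circ \Psi\) (your version also spells out the two-level index bookkeeping that the paper's notation compresses, and records the easy converse direction, which the paper leaves implicit).
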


\begin{proof}
  We need to show that \(\mathsf{cone}(f)\) is locally contractible,
  that is, for each \(n\), there is an \(m \geq n\) such that for
  all \(i \in I_m\), the structure map
  \begin{equation}
    \label{eq:to_be_made_null-homotopic}
    \mathsf{cone}(f)_{m,i}^n = C[-1]_{m,i} \oplus D_{m,i}
    \xrightarrow{\gamma_{m,i}^n \oplus \eta_{m,i}^n}
    C[-1]_n \oplus D_n = \mathsf{cone}(f)_n
  \end{equation}
  is null-homotopic.  Here \(\gamma\) and~\(\eta\) are the structure
  maps of \(C\) and~\(D\), respectively.
  Let~\(\delta^{\mathsf{cone}(f)_n}\) denote the boundary map of the
  cone of~\(f_n\).  Since \(h_{m,i}^C\), \(h_{m,i}^D\) are local
  chain homotopies between \(g_{m,i} \circ f_{m,i}\) and
  \(\gamma_{m,i}^n\), and \(f_n \circ g_{m,i}\) and
  \(\eta_{m,i}^n\), respectively, the matrix
  \[
    \tilde{h}_{m,i} = \begin{pmatrix}
      -h_{m,i}^{C[-1]} & g_{m,i}^n\\
      0 & h_{m,i}^D
    \end{pmatrix}
    \colon \mathsf{cone}(f)_{m,(i,j)} \to \mathsf{cone}(f)_n
  \]
  satisfies
  \[
    \delta^{\mathsf{cone}(f)_n} \circ \tilde{h}_{m,i} + \tilde{h}_{m,i}
    \circ \delta^{\mathsf{cone}(f)_{m,(i,j)}} \\
    =  \begin{pmatrix}
      \gamma_{m,i}^n &  h_{m,i}^D \circ f_{m,i} - f_n \circ h_{m,i}^C \\
      0 & \eta_{m,i}^n
    \end{pmatrix}.
  \]
  Then we compute that \(h = \tilde{h} \circ \Psi\) with
  \[
    \Psi_{m,i}^n \defeq \begin{pmatrix}
      \eta_{m,i}^n &  f_n \circ h_{m,i}^C - h_{m,i}^D \circ f_{m,i}\\
      0 & \gamma_{m,i}^n
    \end{pmatrix},
  \]
  is the desired null-homotopy for~\eqref{eq:to_be_made_null-homotopic}.
\end{proof}

\section{Bivariant local and analytic cyclic homology}\label{sec:bivariant}
\numberwithin{equation}{subsection}

In this section, we specialise Theorem \ref{thm:existence-injective} to the case where \(\cC = \mathsf{Ban}_k\) is the category of Banach spaces and bounded \(k\)-linear maps over a nontrivially valued Banach field \(k\).  By \cite[Lemma A.30]{Ben-Bassat-Kremnizer:Nonarchimedean_analytic}, this category is quasi-abelian. Furthermore, it is a symmetric monoidal category with respect to the completed projective tensor product and \(k\) is its tensor unit. It is also \textit{closed} in the sense that its internal Hom-object \(\Hom_{\mathsf{Ban}_k}(A,B)\)  is a Banach space with respect to the operator norm. It is also finitely complete and cocomplete, so that its ind-completion \(\mathsf{Ind}(\mathsf{Ban}_F)\) is bicomplete. As a consequence of these properties, categories of inductive systems of Banach spaces, and the full subcategory \(\mathsf{CBorn}_k\) of complete bornological vector spaces are ideal for the purposes of cyclic homology theories for topological algebras. 

\subsection{Local cyclic homology for nonarchimedean Banach algebras}\label{subsec:nonahc}

In this subsection, we shall see that something peculiar happens when the category \(\cC\) in Proposition \ref{prop:locally-split-specialised} is the category of  Banach spaces over a discretely valued nonarchimedean Banach field - the ind-locally split exact structure trivialises to the quasi-abelian structure. Let $V$ be a complete discrete valuation ring, $F$ its fraction field and $\mathbb{F}$ its residue field. For the purposes of non-archimedean cyclic theories, we are mainly interested in the additive category of \(\Z/2\Z\)-graded complexes in the category \(\overleftarrow{\mathsf{Ind}(\mathsf{Ban}_F)}\) of projective systems of inductive systems of Banach \(F\)-vector spaces. In this section, we explictly describe the injective model structure - that is, the cofibrations and weak equivalences from the previous section - which greatly simplifies for the category \(\overleftarrow{\mathsf{Ind}(\mathsf{Ban}_F)}\).

\begin{lemma}\label{lem:Ban-enough-injectives}
The quasi-abelian structure on the category \(\mathsf{Ban}_F\) coincides with its split exact structure. In particular, it has enough projectives. Furthermore, \(\mathsf{Ind}(\mathsf{Ban}_F)\) has enough injectives for its quasi-abelian structure. 
\end{lemma}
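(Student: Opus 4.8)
The plan is to establish the three assertions of Lemma \ref{lem:Ban-enough-injectives} in order, exploiting the special structure of Banach spaces over a discretely valued field.

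\textbf{Step 1: The quasi-abelian structure coincides with the split exact structure.} The quasi-abelian structure on \(\mathsf{Ban}_F\) has as its conflations all kernel-cokernel pairs. To show this coincides with the split exact structure, I would show that every such kernel-cokernel pair \(K \into E \onto Q\) splits. The key point here is that \(F\) is discretely valued: over a discretely valued field, every short exact sequence of Banach spaces with closed range splits, because one can construct a bounded linear splitting. Concretely, a strict monomorphism \(K \into E\) identifies \(K\) with a closed subspace, and discreteness of the valuation ensures that the quotient norm on \(Q = E/K\) is again realized by an actual Banach space admitting a bounded section; more precisely, one uses that closed subspaces of Banach spaces over a discretely valued field admit bounded complements (this is a well-known feature absent in the archimedean setting, where only approximate splittings exist). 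Thus every conflation is split, and the quasi-abelian conflations are exactly the split extensions.

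\textbf{Step 2: Enough projectives.} For the split exact structure, every object is projective, since \(\Hom(P,-)\) trivially preserves split deflations (a split epimorphism is preserved by any additive functor). This assertion is therefore immediate once Step 1 identifies the quasi-abelian structure with the split exact structure, and I would simply remark that \emph{every} object of \(\mathsf{Ban}_F\) is projective for this structure.

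\textbf{Step 3: \(\mathsf{Ind}(\mathsf{Ban}_F)\) has enough injectives.} This is where I would invoke the machinery already developed. Having shown in Steps 1--2 that \(\mathsf{Ban}_F\) is quasi-abelian with its split exact structure and has enough projectives, I apply Corollary \ref{cor:ind-C-enough-injectives} directly: for a quasi-abelian category \(\mathcal{C}\), the category \(\mathsf{Ind}(\mathcal{C})\) has enough (functorial) injectives for the ind-locally split exact structure. Here the subtlety flagged in the subsection---that the ind-locally split structure \emph{trivializes} to the quasi-abelian structure on \(\mathsf{Ind}(\mathsf{Ban}_F)\)---is what makes this clean: since over a discretely valued field every conflation in \(\mathsf{Ban}_F\) is split, the indisation of the split exact structure agrees with the quasi-abelian structure on \(\mathsf{Ind}(\mathsf{Ban}_F)\), so the enough-injectives statement for the ind-locally split structure is precisely enough injectives for the quasi-abelian structure.

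The main obstacle I anticipate is Step 1, specifically the splitting claim. The assertion that closed subspaces of Banach \(F\)-spaces split when \(F\) is discretely valued relies genuinely on the non-archimedean, discrete nature of the valuation, and one must be careful to invoke the correct structure theory (for instance, that such spaces admit orthonormal-type bases or that the quotient map admits a norm-preserving-up-to-unit section). Once splitting is secured, Steps 2 and 3 follow formally from the general results already in hand, so the entire weight of the lemma rests on correctly establishing this splitting property of discretely valued Banach spaces.
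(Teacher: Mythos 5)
Your proposal is correct and takes essentially the same route as the paper: prove that every kernel--cokernel pair in \(\mathsf{Ban}_F\) splits, observe that all objects are then projective for the (split \(=\) quasi-abelian) structure, and obtain injectives in \(\mathsf{Ind}(\mathsf{Ban}_F)\) from the indisation machinery of Lemmas \ref{lem:ind-C-enough-injectives} and \ref{lem:ind-enough-inj} (the content of Corollary \ref{cor:ind-C-enough-injectives} that you invoke). The splitting fact you defer to structure theory is settled in the paper exactly along the lines you anticipate --- any quotient \(Q\) is isomorphic to \(C_0(D,F)\) by \cite[Remark 10.2]{Schneider:Nonarchimedean}, and \(C_0(D,F)\) is projective by \cite[A.38]{Ben-Bassat-Kremnizer:Nonarchimedean_analytic}, so \(E \onto Q\) splits --- while the identification of the ind-locally split and quasi-abelian structures on \(\mathsf{Ind}(\mathsf{Ban}_F)\) that you assert in Step 3 is Theorem \ref{thm:locally-split-quasi-abelian}, proved immediately afterwards from the splitting part alone.
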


\begin{proof}
Let \(K \into E \overset{q}\onto Q\) be any extension of Banach spaces. Then by \cite[Remark 10.2]{Schneider:Nonarchimedean}, \(Q \cong C_0(D,F)\) for some set \(D\). Now by \cite[A. 38]{Ben-Bassat-Kremnizer:Nonarchimedean_analytic}, \(C_0(D,F)\) is projective for the quasi-abelian structure on \(\mathsf{Ban}_F\), so that the quotient map \(E \onto Q\) splits. As a consequence, the quasi-abelian and the split exact structures coincide in the category \(\mathsf{Ban}_F\).   Finally, \(\mathsf{Ind}(\mathsf{Ban}_F)\) has enough injectives by Lemmas \ref{lem:ind-C-enough-injectives} and \ref{lem:ind-enough-inj}.
\end{proof}

\begin{theorem}\label{thm:locally-split-quasi-abelian}
Every extension in \(\mathsf{Ind}(\mathsf{Ban}_F)\) is ind-locally split. Consequently, any extension \(K \into E \onto Q\) in the category \(\overleftarrow{\mathsf{Ind}(\mathsf{Ban}_F)}\) is locally split relative to \(\mathsf{Ban}_F\).
\end{theorem}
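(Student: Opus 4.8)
The plan is to establish the first sentence — that every kernel--cokernel pair \(K \overset{i}\into E \overset{p}\onto Q\) in \(\mathsf{Ind}(\mathsf{Ban}_F)\) is ind-locally split — and then to deduce the statement about \(\overleftarrow{\mathsf{Ind}(\mathsf{Ban}_F)}\) from Proposition \ref{prop:locally-split-specialised}. Throughout I would work with the equivalent, lifting formulation of Definition \ref{def:locally_split_exact}: it suffices to show that every morphism \(f\colon X \to Q\) with \(X \in \mathsf{Ban}_F\) lifts along \(p\).

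First I would record the essential input of the discretely valued setting, namely Lemma \ref{lem:Ban-enough-injectives}: on \(\mathsf{Ban}_F\) the quasi-abelian and the split exact structures coincide, so \emph{every} extension in \(\mathsf{Ban}_F\) splits. Next I would represent the extension \(K \into E \onto Q\) in \(\mathsf{Ind}(\mathsf{Ban}_F)\) by a filtered diagram \((K_i \into E_i \overset{p_i}\onto Q_i)_{i \in I}\) of extensions in \(\mathsf{Ban}_F\); by the previous sentence each deflation \(p_i\) admits a section \(s_i \colon Q_i \to E_i\).

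With this representation, local splitting is immediate. Because \(X\) is a constant ind-object, \(\Hom_{\mathsf{Ind}(\mathsf{Ban}_F)}(X, Q) = \varinjlim_i \Hom_{\mathsf{Ban}_F}(X, Q_i)\), so \(f\) is represented by some \(f_{i_0}\colon X \to Q_{i_0}\). The composite \(X \xrightarrow{f_{i_0}} Q_{i_0} \xrightarrow{s_{i_0}} E_{i_0} \to E\) then lifts \(f\) along \(p\), by naturality of \(p\) with respect to the structure maps. Equivalently, \(\Hom_{\mathsf{Ind}(\mathsf{Ban}_F)}(X,E) \to \Hom_{\mathsf{Ind}(\mathsf{Ban}_F)}(X,Q)\) is the filtered colimit over \(I\) of the surjections \(\Hom(X,E_i) \onto \Hom(X,Q_i)\) induced by the split \(p_i\), hence surjective; this is precisely ind-local splitting. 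For the second assertion, \cite[Proposition 4.3.13]{Cortinas-Meyer-Mukherjee:NAHA} represents an arbitrary extension in \(\overleftarrow{\mathsf{Ind}(\mathsf{Ban}_F)}\) by a diagram \((K_n \into E_n \onto Q_n)_{n \in \N}\) of extensions in \(\mathsf{Ind}(\mathsf{Ban}_F)\), each of which is ind-locally split by the first part; Proposition \ref{prop:locally-split-specialised} then gives that the extension is locally split relative to \(\mathsf{Ban}_F\).

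The main obstacle is the representation step: one must justify that an arbitrary kernel--cokernel pair in \(\mathsf{Ind}(\mathsf{Ban}_F)\) is isomorphic to a filtered diagram of extensions coming from \(\mathsf{Ban}_F\). This amounts to checking that the maximal (quasi-abelian) exact structure on \(\mathsf{Ind}(\mathsf{Ban}_F)\) coincides with the indisation of the exact structure on \(\mathsf{Ban}_F\) recorded in Lemma \ref{lem:ind-C-enough-injectives} and \cite[Proposition 4.8]{kelly2021analytic} — equivalently, that strict exactness in \(\mathsf{Ind}(\mathsf{Ban}_F)\) is both produced and detected by filtered colimits of strictly exact sequences of \(\mathsf{Ban}_F\), which rests on the exactness of filtered colimits together with the quasi-abelianness of \(\mathsf{Ind}(\mathsf{Ban}_F)\). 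Once this identification is in place, the remaining ingredients — the section argument, exactness of filtered colimits of abelian groups, and the passage to the pro-level via Proposition \ref{prop:locally-split-specialised} — are entirely formal.
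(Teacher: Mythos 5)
Your proposal is correct and follows essentially the same route as the paper's proof: represent the extension as a filtered system of extensions in \(\mathsf{Ban}_F\), split each level using Lemma \ref{lem:Ban-enough-injectives}, lift a constant-level map \(X \to Q\) through a level section composed with the structure map to \(E\), and pass to \(\overleftarrow{\mathsf{Ind}(\mathsf{Ban}_F)}\) via \cite[Proposition 4.3.13]{Cortinas-Meyer-Mukherjee:NAHA} and Proposition \ref{prop:locally-split-specialised}. The representation step you flag as the main obstacle is simply taken for granted in the paper's proof, so your additional justification (that kernel--cokernel pairs in \(\mathsf{Ind}(\mathsf{Ban}_F)\) are representable by levelwise kernel--cokernel pairs, by quasi-abelianness and exactness of filtered colimits) only makes the argument more complete, not different.
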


\begin{proof}
Let \(K \into E \onto Q\) be an extension in \(\mathsf{Ind}(\mathsf{Ban}_F)\), represented by an inductive system of extensions \((K_i \into E_i \onto Q_i)_{i \in I}\). By Lemma \ref{lem:Ban-enough-injectives}, extension in the system splits in \(\mathsf{Ban}_F\). Consequently, if \(X \to Q\) is any morphism represented by a level map \(X \to Q_i\), its composition with the section at that level yields a map \(X \to E_i\). Composing with the map \(E_i \to E\) yields the required lifting of the original map \(X \to Q\). For the second part, any extension \(K \into E \onto Q\) can be represented by a diagram of extensions \((K_n \into E_n \onto Q_n)_{n \in \N}\) of objects in \(\mathsf{Ind}(\mathsf{Ban}_F)\). For each fixed \(n\), by the first part, the extension \(K_n \into E_n \onto Q_n\) is already ind-locally split. Hence \(K \into E \onto Q\) is locally split, by Propostion  \ref{prop:locally-split-specialised}.
\end{proof}

\begin{corollary}\label{cor:pro-ind-enough-inj}
The category \(\overleftarrow{\mathsf{Ind}(\mathsf{Ban}_F)}\) has enough injectives for its quasi-abelian structure, which coincides with its locally split exact structure relative to \(\mathsf{Ban}_F\). 
\end{corollary}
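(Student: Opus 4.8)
The plan is to obtain the corollary by assembling results already established, the decisive point being that, for $\cC = \mathsf{Ban}_F$, the locally split exact structure relative to $\mathsf{Ban}_F$ happens to exhaust \emph{all} kernel-cokernel pairs, so it coincides with the quasi-abelian structure and the enough-injectives statement can be transported between the two.

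First I would record that $\overleftarrow{\mathsf{Ind}(\mathsf{Ban}_F)}$ is genuinely quasi-abelian, so that the phrase ``its quasi-abelian structure'' is meaningful. Since $\mathsf{Ban}_F$ is quasi-abelian, its ind-completion $\mathsf{Ind}(\mathsf{Ban}_F)$ is again quasi-abelian (this is the structure referred to in Lemma \ref{lem:Ban-enough-injectives}); applying Lemma \ref{lem:quasi-abelian-pro} to $\mathcal{E} = \mathsf{Ind}(\mathsf{Ban}_F)$ then shows that $\overleftarrow{\mathsf{Ind}(\mathsf{Ban}_F)}$ is quasi-abelian. By definition, its conflations for the quasi-abelian structure are precisely all kernel-cokernel pairs.

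Next I would establish the coincidence of the two exact structures. The locally split extensions relative to $\mathsf{Ban}_F$ (Definition \ref{def:internal-external-pro}) form, by construction, a subclass of the kernel-cokernel pairs, which gives one inclusion for free. For the reverse inclusion, Theorem \ref{thm:locally-split-quasi-abelian} asserts that every extension $K \into E \onto Q$ in $\overleftarrow{\mathsf{Ind}(\mathsf{Ban}_F)}$ — that is, every kernel-cokernel pair — is locally split relative to $\mathsf{Ban}_F$. Hence the two classes of conflations agree, and the quasi-abelian and locally split exact structures coincide. In particular their inflations coincide, so the relatively injective objects for the two structures are identical.

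Finally, for enough injectives I would invoke Theorem \ref{thm:enough-injectives-pro} with $\cC = \mathsf{Ban}_F$ (quasi-abelian): it yields that $\overleftarrow{\mathsf{Ind}(\mathsf{Ban}_F)}$, equipped with the locally split exact structure relative to $\mathsf{Ban}_F$, has enough injectives. By the coincidence just established, the same category has enough injectives for its quasi-abelian structure, which is the assertion. I do not expect a genuine obstacle here, since the substance is already carried by Theorems \ref{thm:locally-split-quasi-abelian} and \ref{thm:enough-injectives-pro}; the only thing to be careful about is that ``enough injectives'' is a notion relative to a fixed exact structure, so it is precisely the identification of the two structures (and hence of their inflations and relatively injective objects) that licenses transporting the conclusion from the locally split setting to the quasi-abelian one.
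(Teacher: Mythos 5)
Your proposal is correct, but it routes the argument differently from the paper. The paper's proof works at the \emph{ind}-level and then passes to pro-systems: it invokes Corollary \ref{cor:ind-C-enough-injectives} to get enough injectives in \(\mathsf{Ind}(\mathsf{Ban}_F)\) for the ind-locally split structure, identifies that structure with the quasi-abelian one via the first half of Theorem \ref{thm:locally-split-quasi-abelian}, and then applies Prosmans' Lemma \ref{lem:enough-injectives} (a quasi-abelian category with enough injectives has a pro-category with enough injectives) to conclude. You instead work directly at the \emph{pro}-level: you invoke Theorem \ref{thm:enough-injectives-pro} with \(\cC = \mathsf{Ban}_F\) to get enough injectives for the locally split structure on \(\overleftarrow{\mathsf{Ind}(\mathsf{Ban}_F)}\), and use the second half of Theorem \ref{thm:locally-split-quasi-abelian} to identify the conflation classes, transporting the conclusion to the quasi-abelian structure. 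Both arguments are sound and of comparable length; what yours buys is that the corollary becomes a literal specialisation of the paper's main injectivity theorem (Theorem \ref{thm:enough-injectives-pro}), avoiding a separate appeal to Prosmans at this stage (though the proof of Theorem \ref{thm:enough-injectives-pro} itself contains a fake-product argument of the same flavour), and you also make explicit two points the paper leaves implicit: that \(\overleftarrow{\mathsf{Ind}(\mathsf{Ban}_F)}\) is quasi-abelian so that the statement is meaningful, and that ``enough injectives'' is relative to an exact structure, so it is the coincidence of conflations --- hence of inflations and of relatively injective objects --- that licenses the transport. The paper's route, by contrast, only needs the coincidence at the ind-level for the injectivity claim, with the pro-level coincidence asserted in the statement coming from the same Theorem \ref{thm:locally-split-quasi-abelian} anyway.
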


\begin{proof}
By Corollary \ref{cor:ind-C-enough-injectives}, \(\mathsf{Ind}(\mathsf{Ban}_F)\) has enough injectives for the ind-locally split exact structure, which coincides with the quasi-abelian structure by Theorem \ref{thm:locally-split-quasi-abelian}. The conclusion now follows from Lemma \ref{lem:enough-injectives}. 
\end{proof}

What we have therefore shown is that locally split extensions in \(\overleftarrow{\mathsf{Ind}(\mathsf{Ban}_F)}\) relative to \(\mathsf{Ban}_F\) are equivalent to projective limits of diagrams of \emph{all} extensions in \(\mathsf{Ind}(\mathsf{Ban}_F)\). The surprising feature of the nonarchimedean setting is that such extension in \(\mathsf{Ind}(\mathsf{Ban}_F)\) is equivalent to an ind-locally split extension in \(\mathsf{Ind}(\mathsf{Ban}_F)\) relative to the subcategory \(\mathsf{Ban}_F\).

In \cite{Cortinas-Meyer-Mukherjee:NAHA, Meyer-Mukherjee:localhc, Meyer-Mukherjee:HA}, the authors define three chain complex valued functors \[\mathbb{HA} \colon \overleftarrow{\mathsf{Alg}(\mathsf{CBorn}_{V}^{\mathrm{tf}})} \to \mathsf{Der}(\overleftarrow{\mathsf{Ind}(\mathsf{Ban}_F)}),\] \[\mathbb{HA} \colon \overleftarrow{\mathsf{Alg}(\mathsf{Mod}_\mathbb{F})} \to \mathsf{Der}(\overleftarrow{\mathsf{Ind}(\mathsf{Ban}_F)}),\]
and \[\mathbb{HL} \colon \mathsf{Alg}_V^\dagger \to \mathsf{Der}(\overleftarrow{\mathsf{Ind}(\mathsf{Ban}_F))}\] for projective systems of complete, bornologically torsionfree \(V\)-algebras, projective systems of \(\mathbb{F}\)-algebras and \textit{dagger algebras}. The latter class of algebras were introduced in \cite{Meyer-Mukherjee:Bornological_tf}. The definition of these homology theories is beyond the scope of this article, and we therefore direct the interested reader to their original references above. Each of these functors is homotopy invariant for suitable classes of homotopies, matricially stable and excisive. 

An important property of the analytic cyclic homology theory defined in \cite{Meyer-Mukherjee:HA} is that it is independent of choices of liftings. More precisely, the main result that used local homotopy equivalences that this article clarifies conceptually is the following:

\begin{theorem}\cite[Theorem 5.5]{Meyer-Mukherjee:HA}\label{thm:main}
Let \(D_1\) and \(D_2\) be two dagger algebras such that \(D_1/\pi D_1 \cong A \cong D_2/\pi D_2\), and the quotient maps are bounded when we view \(A\) as a bornological algebra with the fine bornology. Then we have weak equivalences \(\mathbb{HA}(D_1) \simeq \mathbb{HA}(A) \simeq \mathbb{HA}(D_2)\). 
\end{theorem}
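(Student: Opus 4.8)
The plan is to deduce both weak equivalences from a single rigidity statement: for an arbitrary dagger algebra $D$, the reduction homomorphism $q\colon D\onto D/\pi D$ induces a local chain homotopy equivalence $\mathbb{HA}(D)\to\mathbb{HA}(D/\pi D)$. Granting this and applying it to $D_1$ and $D_2$ together with the identifications $D_1/\pi D_1\cong A\cong D_2/\pi D_2$ yields $\mathbb{HA}(D_1)\simeq\mathbb{HA}(A)\simeq\mathbb{HA}(D_2)$ simultaneously. Here I use that, after the identifications of Subsection \ref{subsec:nonahc}, the symbol $\simeq$ denotes a weak equivalence in the injective model structure of Theorem \ref{thm:existence-injective}, which by Proposition \ref{prop:explicit_local_homotopy_equivalences} is exactly a local chain homotopy equivalence; I also use that the value of $\mathbb{HA}$ on the $\mathbb{F}$-algebra $A$ is, by its construction in \cite{Meyer-Mukherjee:HA}, computed through a compatible dagger lifting, so that $q$ genuinely induces a morphism $\mathbb{HA}(D)\to\mathbb{HA}(A)$ in $\mathsf{Der}(\overleftarrow{\mathsf{Ind}(\mathsf{Ban}_F)})$.

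First I would make $\mathbb{HA}$ explicit. Each $\mathbb{HA}(D)$ is represented by the $X$-complex $X(\mathcal{T}D)$ of the analytic (dagger) tensor algebra $\mathcal{T}D$, organised via its $\pi$-adic filtration into a pro-object of $\overleftarrow{\mathsf{Ind}(\mathsf{Ban}_F)}$. The quotient $q$ lifts, by the universal property of the tensor algebra, to a bounded homomorphism $\mathcal{T}D\to\mathcal{T}A$ and hence to a chain map $X(\mathcal{T}q)$ of pro-ind-supercomplexes; this is the morphism I must show is a local chain homotopy equivalence. For the candidate homotopy inverse I would choose a vector-space section $s$ of $q$. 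Because $A$ carries the \emph{fine} bornology, every linear map out of $A$ is automatically bounded, so $s$ is a bounded linear section; by the universal property it induces a bounded homomorphism $\mathcal{T}A\to\mathcal{T}D$ and thus a chain map $g=X(\mathcal{T}s)$ in the opposite direction.

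The heart of the argument is that the composites $g\circ X(\mathcal{T}q)$ and $X(\mathcal{T}q)\circ g$ differ from the respective identities only by contributions supported in the ideal generated by $\pi$, and $\pi$ is topologically nilpotent in the dagger structure. Consequently these differences are annihilated as one descends along the $\pi$-adic pro-filtration: for each level $n$ of the projective system I expect to find an index $m\geq n$ such that the structure maps $\gamma_{m,i}^n$ and $\eta_{m,i}^n$ factor through a stage on which $\pi$ acts by a sufficiently high power, whereupon a Cuntz--Quillen style contracting homotopy on the $X$-complex supplies the bounded maps $g_{m,i}^n$, $h_{m,i}^C$, $h_{m,i}^D$ demanded by Proposition \ref{prop:explicit_local_homotopy_equivalences}. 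Assembling these into the matrices $\tilde{h}_{m,i}$ and $\Psi_{m,i}^n$ of that proposition then certifies that $\mathsf{cone}(X(\mathcal{T}q))$ is locally contractible, hence exact by Proposition \ref{prop:locally_contractible_exact}, i.e.\ that $X(\mathcal{T}q)$ is a weak equivalence. Conceptually this is the nonarchimedean Goodwillie phenomenon: one may instead invoke excision for $\mathbb{HA}$ on the extension $\pi D\into D\onto A$ to obtain a triangle $\mathbb{HA}(\pi D)\to\mathbb{HA}(D)\to\mathbb{HA}(A)$ and observe that $\mathbb{HA}(\pi D)\simeq 0$ because $\pi D$ is analytically nilpotent.

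The hard part is precisely the quantitative analytic input underlying both formulations: one must produce the iterated contracting homotopies with norm bounds that survive passage to the dagger completion of the tensor algebra, uniformly in the ind-index $i\in I_m$ and compatibly across the projective system, and arrange the index shift $n\mapsto m$ so that \emph{all} higher $\pi$-adic tails are killed at once. Equivalently, one must establish the vanishing $\mathbb{HA}(\pi D)\simeq 0$ for an analytically nilpotent ideal, which is the genuinely nontrivial estimate. Once these bounds are in place the verification of the hypotheses of Proposition \ref{prop:explicit_local_homotopy_equivalences} is formal, and the two asserted weak equivalences follow.
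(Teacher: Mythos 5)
The first thing to note is that this paper contains no proof of Theorem \ref{thm:main}: it is quoted verbatim from \cite[Theorem 5.5]{Meyer-Mukherjee:HA}, and the contribution of the present paper is only to reinterpret the local chain homotopy equivalences used there as the weak equivalences of the injective model structure, via Propositions \ref{prop:locally_contractible_exact} and \ref{prop:explicit_local_homotopy_equivalences}. Measured against the actual proof in \cite{Meyer-Mukherjee:HA}, your global strategy has the right shape --- reduce both equivalences to the single claim that the reduction \(q \colon D \onto D/\pi D\) induces a weak equivalence, certified through the explicit homotopy data of Proposition \ref{prop:explicit_local_homotopy_equivalences} --- and your slogan that the comparison maps differ from identities by \(\pi\)-divisible, analytically nilpotent contributions is indeed the crux. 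But your load-bearing construction fails. In mixed characteristic (say \(V = W(\mathbb{F})\), where \(p\) is a unit times a power of \(\pi\); this is the principal case of \cite{Meyer-Mukherjee:HA} and is covered by the statement, since \(V\) is an arbitrary complete discrete valuation ring), there is \emph{no} additive, let alone bounded linear, section \(s \colon A \to D\) of \(q\): since \(p \cdot 1_A = 0\) in \(A\), any additive \(s\) satisfies \(p\, s(a) = s(pa) = 0\), and torsion-freeness of the dagger algebra \(D\) forces \(s = 0\). Your appeal to the fine bornology is beside the point --- boundedness is not the obstruction, additivity is. Consequently \(g = X(\mathcal{T}s)\) cannot be formed and the proposed homotopy inverse evaporates. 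Relatedly, \(\mathcal{T}A\) is not an object of the theory: \(A\) is \(\pi\)-torsion, the analytic and dagger tensor algebras are built from torsion-free complete bornological \(V\)-modules, and \(\mathbb{HA}(A)\) is \emph{defined} in \cite{Meyer-Mukherjee:HA} through a lift. The actual argument lifts set-theoretically to the free \(V\)-module on (a basis of) \(A\), uses the universal property of the tube-filtered, dagger-completed tensor algebra to produce homomorphisms between the towers, and then invokes a Cuntz--Quillen-type uniqueness statement: homomorphisms agreeing modulo \(\pi\) induce locally chain homotopic maps on the \(X\)-complexes.

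Your excision fallback is likewise unavailable as stated. The sequence \(\pi D \into D \onto A\) is not an extension in the domain of any single excisive functor in this theory: the quotient \(A\) is killed by \(\pi\), hence is not bornologically torsion-free and lies outside the category \(\overleftarrow{\mathsf{Alg}(\mathsf{CBorn}_{V}^{\mathrm{tf}})}\) where the excision theorem of \cite{Cortinas-Meyer-Mukherjee:NAHA} applies, while on \(\mathbb{F}\)-algebras \(\mathbb{HA}\) is a different functor defined via liftings, so the proposed triangle mixes the two; moreover the same torsion computation shows this extension admits no bounded \(V\)-linear section, so even a formally extended excision statement with semisplitness hypotheses would not apply. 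The vanishing on analytically nilpotent algebras that you single out as the genuinely nontrivial input is indeed an ingredient of \cite{Meyer-Mukherjee:HA}, but it enters through the kernels of the tensor-algebra extensions, not through a triangle on \(\pi D \into D \onto A\). (In equal characteristic \(V = \mathbb{F}[[t]]\) an \(\mathbb{F}\)-linear bounded section does exist and your first route could plausibly be salvaged, but a proof of the theorem as stated must cover mixed characteristic.) So the proposal has a genuine gap precisely at its central step.
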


In other words, the weak equivalences of the injective model structure on the category \(\mathsf{Kom}(\overleftarrow{\mathsf{Ind}(\mathsf{Ban}_F)})\) are precisely the local chain homotopy equivalences used to prove \cite[Theorem 5.5]{Meyer-Mukherjee:HA}. The authors also define bivariant versions of analytic cyclic homology in \cite{Cortinas-Meyer-Mukherjee:NAHA, Meyer-Mukherjee:HA}, whose morphism (ind-Banach) space we may now interpret using the homotopy category of our model category: let \(A\) and \(B\) belong to one of the categories of algebras mentioned at the start of the section. Denoting by \(\mathrm{Hom}_{\mathsf{Ho}(\overleftarrow{\mathsf{Ind}(\mathsf{Ban}_F)})}\) the morphism space of the homotopy category of the model category \(\overleftarrow{\mathsf{Ind}(\mathsf{Ban}_F)}\) with its injective model structure, \textit{bivariant analytic cyclic homology} can be redefined as \begin{align*}
    \mathrm{HA}_i(A,B) \defeq \mathsf{Hom}_{\mathsf{Ho}(\mathsf{Kom}(\overleftarrow{\mathsf{Ind}(\mathsf{Ban}_F)}))}(\mathbb{HA}(A), \mathbb{HA}(B)[i]) \\
    \cong H_i(\mathsf{Hom}_{\mathsf{HoKom}(\overleftarrow{\mathsf{Ind}(\mathsf{Ban}_F)})}(R\mathbb{HA}(A), R\mathbb{HA}(B)))
\end{align*} for \(i = 0,1\), where \(R\) is the fibrant replacement functor. Note that by the description of cofibrant objects provided after Theorem \ref{thm:existence-injective}, every object is already cofibrant, so there is no need for cofibrant replacement.  As local cyclic homology is defined only by changing the bornology on a dagger algebra to the \textit{compactoid bornology} (see \cite[Section 3]{Meyer-Mukherjee:localhc}), that is, \(\mathbb{HL}(A) = \mathbb{HA}(A')\), where \(A'\) is the dagger algebra \(A\) with the compactoid bornology, we may also put \[\mathrm{HL}_i(A,B) \defeq \mathrm{Hom}_{\mathsf{Ho}(\mathsf{Kom}(\overleftarrow{\mathsf{Ind}(\mathsf{Ban}_F)}))}(\mathbb{HL}(A), \mathbb{HL}(B)[i]),\] for \(i = 0,1\). By \cite[Theorem 7.4]{Meyer-Mukherjee:localhc}, we have a chain homotopy equivalence \(\mathbb{HL}(V) \cong \mathbb{HA}(V)\), so that \[\mathrm{HL}_i(V, B) \cong \mathrm{HL}_i(B)\] for all dagger algebras \(B\) and \(i = 0,1\).

\subsection{Local cyclic homology for pro-\(C^*\)-algebras}\label{subsec:prohl}

We now consider the category \(\mathsf{Ban}_\C\) of complex Banach spaces, viewed as an exact category for its split exact structure. This does \emph{not} simplify to the quasi-abelian structure. The resulting ind-locally split exact structure on the category \(\mathsf{Ind}(\mathsf{Ban}_\C)\) is used to define the \textit{local homotopy category of complexes} - the correct target category of \textit{local cyclic homology} (\cite{Meyer:HLHA,Puschnigg:Diffeotopy}). By definition, the local homotopy category of chain complexes is the localisation of the naive homotopy category of \(\mathsf{Kom}(\mathsf{Ind}(\mathsf{Ban}_\C))\) of chain complexes at the collection of chain maps \(f\) whose mapping cone \(\mathsf{cone}(f)\) is ind-locally split exact.  It is actually rather important to work in this generality to prove an important property of local cyclic homology, namely, invariance under \textit{isoradial embeddings} defined in \cite[Section 3.4]{Meyer:HLHA}. For an isoradial, dense subalgebra \(A \subseteq B\) - for instance \(\mathrm{C}^\infty(M) \subseteq \mathrm{C}(M)\) for a smooth manifold \(M\) - the induced map \(\mathbb{HL}(\mathrm{C}^\infty(M)) \to \mathbb{HL}(\mathrm{C}(M))\) in local cyclic homology is a local chain homotopy equivalence by \cite[Theorem 6.21]{Meyer:HLHA}. Equivalently, its cone is ind-locally split exact. 

Invariance under isoradial embeddings is an important reason why local cyclic homology yields good results for \(C^*\)-algebras. However, neither local cyclic homology nor the related \textit{analytic cyclic homology}, commute with inverse limits, which prevents their extension to pro-\(C^*\)-algebras.  To extend local cyclic homology to pro-\(C^*\)-algebras, one needs to enlarge the target category to include projective systems of chain complexes in \(\mathsf{Ind}(\mathsf{Ban}_\C)\). This is what justifies the generality of Theorem \ref{thm:existence-injective}. With the homotopy category of \(\overleftarrow{\mathsf{Kom}(\mathsf{Ind}(\mathsf{Ban}_\C))}\) with the injective model structure as the target, we can extend analytic cyclic homology

\[
\xymatrix@R+1pc@C+2pc{
\mathsf{Alg}(\mathsf{CBorn}_\C) \ar[d] \ar[r]^{\mathbb{HA}} & \mathsf{Ho}(\overleftarrow{\mathsf{Kom}(\mathsf{Ind}(\mathsf{Ban}_\C))}) \\
\overleftarrow{\mathsf{Alg}(\mathsf{CBorn}_\C)} \ar[ur]_{\mathbb{HA}^{\mathrm{pro}}} & }
\] to projective systems of complete bornological \(\C\)-algebras, by applying it levelwise to each bornological algebra. Similarly, we can extend local cyclic homology \[\mathbb{HL}^{\mathrm{pro}} \colon \overleftarrow{\mathsf{Alg}(\mathsf{Ind}(\mathsf{Ban}_\C))} \to \mathsf{Ho}(\overleftarrow{\mathsf{Kom}(\mathsf{Ind}(\mathsf{Ban}_\C))})\] to pro-algebras.

\begin{theorem}\label{thm:pro-HL-properties}

The functors
\begin{gather*} 
\mathbb{HA}^{\mathrm{pro}} \colon \overleftarrow{\mathsf{Alg}(\mathsf{CBorn}_\C)} \to \mathsf{Ho}(\overleftarrow{\mathsf{Kom}(\mathsf{Ind}(\mathsf{Ban}_\C))})\\
\text{ and } \mathbb{HL}^{\mathrm{pro}} \colon \overleftarrow{\mathsf{Alg}(\mathsf{Ind}(\mathsf{Ban}_\C))} \to \mathsf{Ho}(\overleftarrow{\mathsf{Kom}(\mathsf{Ind}(\mathsf{Ban}_\C))})
\end{gather*} 
satisfy 
\begin{itemize}
\item homotopy invariance for homotopies of bounded variation;
\item stability with respect to algebras of nuclear operators;
\item excision for extensions of pro-bornological \(\C\)-algebras with a bounded pro-linear section. 
\end{itemize}

\end{theorem}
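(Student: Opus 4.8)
The plan is to reduce each of the three properties to the corresponding statement for the non-pro theories $\mathbb{HA}$ and $\mathbb{HL}$, established in \cite{Cortinas-Meyer-Mukherjee:NAHA,Meyer-Mukherjee:HA,Meyer:HLHA}, and then to transport it to the pro-level by means of the explicit description of local chain homotopy equivalences in Proposition \ref{prop:explicit_local_homotopy_equivalences}. By construction, $\mathbb{HA}^{\mathrm{pro}}$ and $\mathbb{HL}^{\mathrm{pro}}$ send a pro-algebra $(A_n)_{n \in \N}$ to the projective system $(\mathbb{HA}(A_n))_{n \in \N}$ in $\overleftarrow{\mathsf{Kom}(\mathsf{Ind}(\mathsf{Ban}_\C))}$, with structure maps induced functorially. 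The key observation, used throughout, is that a morphism of such pro-complexes which is levelwise and coherently a local chain homotopy equivalence in $\mathsf{Ind}(\mathsf{Ban}_\C)$ is a weak equivalence for the injective model structure of Theorem \ref{thm:existence-injective}; indeed, its mapping cone is then locally contractible, and Proposition \ref{prop:explicit_local_homotopy_equivalences} is precisely the assertion that such a map is detected by coherent levelwise contracting data (with the reindexing $m \geq n$ that the projective system forces).

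For homotopy invariance, I would note that a homotopy of bounded variation between two pro-morphisms is itself a pro-morphism, and hence restricts, after reindexing, to a compatible family of homotopies of bounded variation at each level. Levelwise homotopy invariance of $\mathbb{HA}$, resp. $\mathbb{HL}$, produces for each $n$ a chain homotopy between the two induced maps, and functoriality of the construction guarantees that these homotopies are compatible with the structure maps of the projective system. Feeding this coherent family into Proposition \ref{prop:explicit_local_homotopy_equivalences} shows that the two induced morphisms agree in $\mathsf{Ho}(\overleftarrow{\mathsf{Kom}(\mathsf{Ind}(\mathsf{Ban}_\C))})$. Stability is entirely analogous: the canonical embedding of each algebra into its stabilization by nuclear operators is a local chain homotopy equivalence at every level by levelwise stability, and the resulting coherent levelwise equivalences are promoted to a weak equivalence of pro-complexes by the same proposition.

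For excision, I would first use the levelwise representation of pro-objects (as underlying Proposition \ref{prop:locally-split-specialised}) to present a pro-extension $I \into E \onto Q$ with a bounded pro-linear section by a diagram $(I_n \into E_n \onto Q_n)_{n \in \N}$ of extensions of bornological algebras with bounded linear sections. Levelwise excision then yields, for each $n$, a local chain homotopy equivalence from the mapping cone of $\mathbb{HA}(I_n) \to \mathbb{HA}(E_n)$ onto $\mathbb{HA}(Q_n)$; since the mapping cone in $\overleftarrow{\mathsf{Kom}(\mathsf{Ind}(\mathsf{Ban}_\C))}$ is formed levelwise, these assemble to a map of pro-complexes, which by Proposition \ref{prop:explicit_local_homotopy_equivalences} is a weak equivalence, yielding the desired distinguished triangle in the triangulated homotopy category. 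The main obstacle is exactly this assembly step: unlike the homotopies of the first two properties, the contracting homotopies witnessing levelwise excision depend on the chosen linear sections and are not manifestly natural, so one must verify that they can be organized into a single coherent family compatible with the projective structure maps. Establishing this coherence --- i.e.\ that the excision contractions produced in \cite{Cortinas-Meyer-Mukherjee:NAHA,Meyer:HLHA} fit the reindexed pattern demanded by Proposition \ref{prop:explicit_local_homotopy_equivalences} --- is the technical heart of the proof.
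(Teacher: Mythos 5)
Your overall strategy --- apply $\mathbb{HA}$ (resp.\ $\mathbb{HL}$) levelwise, invoke the known properties of the non-pro theories at each level, and promote the resulting levelwise equivalences to weak equivalences of pro-complexes --- is exactly the paper's proof. However, the step you single out as ``the technical heart'' (organizing the levelwise contracting homotopies into a single coherent family compatible with the projective structure maps) is a spurious obstacle: no such coherence is required anywhere. Inspect Definition \ref{def:locally_contractible} and Proposition \ref{prop:explicit_local_homotopy_equivalences}: a cone is locally contractible, and a map is a local chain homotopy equivalence, as soon as \emph{for each} $n$ there exists $m \geq n$ and homotopy data from level $m$ to level $n$; there is no compatibility condition relating the data for different values of $n$. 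In particular, if each level of the cone is (ind-locally) contractible via arbitrary, mutually unrelated homotopies, one may take $m=n$ and the pro-complex is already locally contractible. What genuinely must be coherent is the chain map itself, so that it defines a morphism in $\overleftarrow{\mathsf{Kom}(\mathsf{Ind}(\mathsf{Ban}_\C))}$ --- and this is automatic, since the comparison maps are induced by natural algebra maps (evaluation for homotopy invariance, the corner embedding for stability, the projection $E \onto Q$ for excision). So the verification you defer is vacuous, and your proof closes immediately once this is observed.

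The paper's proof is correspondingly short, and in two respects even simpler than your outline. For homotopy invariance and stability it quotes \cite[Theorems 5.45 and 5.65]{Meyer:HLHA}, which give \emph{natural chain homotopy equivalences} $\mathbb{HA}(A_n) \simeq \mathbb{HA}(A_n \otimes_\pi \mathcal{A}([0,1]))$ and $\mathbb{HA}(A_n)\simeq \mathbb{HA}(A_n\otimes_\pi \mathcal{M})$ at each level --- honest homotopy equivalences, not merely local ones --- so, as the remark after the theorem notes, these two properties hold even for the quasi-abelian exact structure on $\overleftarrow{\mathsf{Ind}(\mathsf{Ban}_\C)}$, with no reindexing subtleties at all. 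For excision, the only compatibility genuinely needed is that of the bounded linear \emph{sections} in the levelwise representation of the pro-extension (so that each level is an extension to which the levelwise excision theorem applies); you do secure this when you represent $I \into E \onto Q$ by $(I_n \into E_n \onto Q_n)_{n\in\N}$, after which the levelwise excision equivalences, assembled along the natural comparison map of cones, give the weak equivalence without any further coherence argument.
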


\begin{proof}
We view the space of bounded variations \(\mathcal{A}([0,1])\) and the algebra \(\mathcal{l}\) of nuclear operators as constant pro-systems. Now for each \(n\), we have chain homotopy equivalences \(\mathbb{HA}(A_n) \simeq \mathbb{HA}(A_n \otimes_{\pi} \mathcal{A}([0,1]))\) and \(\mathbb{HA}(A_n) \simeq \mathbb{HA}(A_n \otimes_{\pi} \mathcal{M})\) by \cite[Theorem 5.45, Theorem 5.65]{Meyer:HLHA}. By varying \(n\), we get weak equivalences \(\mathbb{HA}^{\mathrm{pro}}(A) \simeq \mathbb{HA}^{\mathrm{pro}}(A \otimes_{\pi} \mathcal{A}([0,1])\) and \(\mathbb{HA}^{\mathrm{pro}}(A) \simeq \mathbb{HA}(A \otimes_{\pi} \mathcal{M} )\) in the homotopy category. For excision, we observe that any extension of pro-algebras that splits by a pro-linear section can be represented by an extension of bornological algebras with compatible bounded linear sections. Now use the excision theorem for bornological algebras with bounded linear sections. 
\end{proof}

We remark that Theorem \ref{thm:pro-HL-properties} goes through even if we work in the quasi-abelian category \(\overleftarrow{\mathsf{Ind}(\mathsf{Ban}_\C)}\) as the equivalences \(\mathbb{HA}(A) \simeq \mathbb{HA}(A \otimes_{\pi} \mathcal{A}([0,1])\), \(\mathbb{HA}(A) \simeq \mathbb{HA}(A \otimes_{\pi} \mathcal{M})\) are chain homotopy equivalences, rather than the more general weak equivalences of Theorem \ref{thm:existence-injective}. The context in which these more general local chain homotopy equivalences are unavoidable is the following:

\begin{theorem}\label{thm:isoradial-pro}
Let \(A \to B\) be a pro-algebra homomorphism in \(\overleftarrow{\mathsf{Ind}(\mathsf{Alg}(\mathsf{Ban}_\C))}\) that is represented by an inverse system of isoradial embeddings at each pro-system level. Then \(\mathbb{HL}^{\mathrm{pro}}(A) \simeq \mathbb{HL}^{\mathrm{pro}}(B)\) in  \(\mathsf{Ho}(\overleftarrow{\mathsf{Kom}(\mathsf{Ind}(\mathsf{Ban}_\C))})\). 
\end{theorem}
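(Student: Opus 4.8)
The plan is to show that the comparison map $f\colon \mathbb{HL}^{\mathrm{pro}}(A)\to\mathbb{HL}^{\mathrm{pro}}(B)$ is a weak equivalence in the injective model structure of Theorem \ref{thm:existence-injective}; since weak equivalences descend to isomorphisms in $\mathsf{Ho}(\overleftarrow{\mathsf{Kom}(\mathsf{Ind}(\mathsf{Ban}_\C))})$, this yields $\mathbb{HL}^{\mathrm{pro}}(A)\simeq\mathbb{HL}^{\mathrm{pro}}(B)$. Since $\mathbb{HL}^{\mathrm{pro}}$ is defined by applying $\mathbb{HL}$ levelwise, I would fix the given representative $(f_n\colon A_n\to B_n)_{n\in\N}$ of the pro-map by an inverse system of isoradial embeddings, so that $f$ is represented by the level-wise family $(\mathbb{HL}(f_n))_{n}$; it then suffices to prove that this level-wise chain map is a weak equivalence. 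By Theorem \ref{thm:existence-injective} together with Definition \ref{def:locally_contractible} and Proposition \ref{prop:locally_contractible_exact}, $f$ is a weak equivalence precisely when its mapping cone $\mathsf{cone}(f)$ is locally contractible, equivalently exact for the locally split exact structure on $\overleftarrow{\mathsf{Ind}(\mathsf{Ban}_\C)}$. The whole argument thus reduces to analysing this cone.

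Because $f$ is represented level-wise and mapping cones are formed degree-wise, the cone is the level-wise projective system $\mathsf{cone}(f)\cong(\mathsf{cone}(\mathbb{HL}(f_n)))_{n\in\N}$, and its kernel $\ker(d)\cong(\ker(d_n))_n$ is likewise computed level-wise, since $\mathsf{Ind}(\mathsf{Ban}_\C)$ has kernels and kernels of a level-wise map in a projective-system category are formed level-wise. I would then invoke the single-level input: for each $n$ the map $A_n\to B_n$ is an isoradial embedding, so by \cite[Theorem 6.21]{Meyer:HLHA} the induced map $\mathbb{HL}(f_n)$ is a local chain homotopy equivalence in $\mathsf{Ind}(\mathsf{Ban}_\C)$, i.e. $\mathsf{cone}(\mathbb{HL}(f_n))$ is ind-locally split exact. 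In other words, for every $n$ the defining conflation $\ker(d_n)\into\mathsf{cone}(\mathbb{HL}(f_n))\onto\ker(d_n)$ is ind-locally split.

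It remains to pass from these level-wise ind-locally split conflations to a single locally split conflation in $\overleftarrow{\mathsf{Ind}(\mathsf{Ban}_\C)}$. By the previous paragraph, the defining conflation $\ker(d)\into\mathsf{cone}(f)\onto\ker(d)$ is literally the level-wise diagram $(\ker(d_n)\into\mathsf{cone}(\mathbb{HL}(f_n))\onto\ker(d_n))_{n}$ of ind-locally split extensions, so Proposition \ref{prop:locally-split-specialised} immediately gives that it is locally split relative to $\mathsf{Ban}_\C$; hence $\mathsf{cone}(f)$ is exact and, by Proposition \ref{prop:locally_contractible_exact}, locally contractible. Equivalently, one may verify Definition \ref{def:locally_contractible} by hand: taking the projective index $k$ equal to $n$, the null-homotopies required for the structure maps $\alpha_{n,i}^{n}$ are exactly the cofinal ind-contracting homotopies furnished by the ind-local split exactness of each $\mathsf{cone}(\mathbb{HL}(f_n))$.

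I expect the main (and essentially only) obstacle to be this last step, namely seeing that the level-wise homotopies assemble correctly. The subtlety is that the contracting homotopies exist only cofinally in the ind-direction and are in no way compatible with the projective-system structure maps, so $f$ is typically \emph{not} a chain homotopy equivalence, as stressed in the discussion preceding the statement. What rescues the argument is that Definition \ref{def:locally_contractible} permits the projective index $k$ to be chosen equal to the given level $n$, so that no cross-level compatibility is needed and the purely level-wise data suffice. This is precisely the point at which the more flexible notion of local chain homotopy equivalence — equivalently, weak equivalence in the injective model structure — becomes indispensable.
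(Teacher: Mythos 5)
Your proposal is correct and takes essentially the same approach as the paper: reduce to the levelwise chain maps \(\mathbb{HL}(A_n) \to \mathbb{HL}(B_n)\), invoke Meyer's isoradial invariance results to conclude that each levelwise mapping cone is ind-locally split exact, and assemble these into a locally split exact cone in \(\overleftarrow{\mathsf{Ind}(\mathsf{Ban}_\C)}\). The paper's proof is a compressed version of yours (citing \cite[Theorems 6.20 and 6.11]{Meyer:HLHA} where you cite Theorem 6.21), leaving implicit the assembly step via Proposition \ref{prop:locally-split-specialised} and Proposition \ref{prop:locally_contractible_exact} that you correctly spell out.
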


\begin{proof}
By definition of \(\mathbb{HL}^{\mathrm{pro}}\), we only need to consider the levelwise maps \[\mathbb{HL}(A_n) \to \mathbb{HL}(B_n)\] between local cyclic homology complexes in \(\mathsf{Ind}(\mathsf{Ban}_\C)\), the cones of which are ind-locally split exact relative to \(\mathsf{Ban}_\C\) as a consequence of \cite[Theorem 6.20 and 6.11]{Meyer:HLHA}.  
\end{proof}

\subsubsection{Chern character}\label{subsub:chern}
    Let $CP$ be the category whose objects are the separable $C^*$-algebras and whose morphisms are the completely positive maps, and $C^*\mathsf{Alg}\subset CP$ the subcategory with the same objects but where homomorphisms are $*$-algebra homomorphisms. 
    Let $\cC\subset\overleftarrow{CP}$ be the full subcategory on the inverse systems whose transition maps are surjective $*$-algebra homomorphisms, and let $\cD\subset \cC$ the category with the same objects, and with the homomorphisms making it into a full subcategory of $\overleftarrow{C^*\mathsf{Alg}}$.
    In his thesis \cite[Chapter 3]{Bonkat:Thesis}, Bonkat constructs a functor \(\cD \to \mathbf{KK}_{\cC}^{\cD}\) and proves \cite[Satz 3.5.11]{Bonkat:Thesis} that it is universal among those taking valued in an additive category that are \(\mathrm{C}([0,1])\)-homotopy invariant, compact operator stable and half exact with respect to extensions with completely positive contractive linear sections. Actually, the construction in \cite{Bonkat:Thesis} works for general pairs of subcategories \(\cD \subseteq \cC\) of pro-\(C^*\)-algebras, satisfying the axioms laid out in \cite[Section 2.1 and Definition 2.4.1]{Bonkat:Thesis}. 
    Now by Theorem \ref{thm:pro-HL-properties}, the restriction of \(HL_0^{\mathrm{pro}}(-,-)\) to \(\mathcal{D}\) satisfies all these properties, as the \(C^*\)-algebra of compact operators are a special case of algebras of nuclear operators. So by the universal property of Bonkat's bivariant \(K\)-theory,  we get a bivariant Chern character \[\mathbf{KK}_{\cC}^{\cD}(A,B) \to \mathrm{HL}_0^{\mathrm{pro}}(A,B) = \mathrm{Hom}_{\mathsf{Ho}(\mathsf{Kom}(\overleftarrow{\mathsf{Ind}(\mathsf{Ban}_\C)}))}(\mathbb{HL}^{\mathrm{pro}}(A), \mathbb{HL}^{\mathrm{pro}}(B)),\] for pro-$C^*$-algebras \(A\) and \(B \in \cD\).

\end{document}